\newtheorem{proposition}{Proposition}
\newtheorem{lemma}[proposition]{Lemma}
\newtheorem{theorem}{Theorem}
\newenvironment {proof}%
{ \noindent {\em Proof. }}%
{\hspace*{\fill}$\Box$\par \medskip }
\def \Mid{\quad \vrule \quad}
\def\BZ{{\hbox{$\mathbb{Z}$}}}
\def\BF{{\hbox{$\mathbb{F}$}}}
\def\BN{{\hbox{$\mathbb{N}$}}}
\def\BR{{\hbox{$\mathbb{R}$}}}
\def\BC{{\hbox{$\mathbb{C}$}}}
\def\BQ{{\hbox{$\mathbb{Q}$}}}
\def\BA{{\hbox{$\mathbb{A}$}}}
\begin{document}

\title{Natural boundaries for Euler products of 
Igusa zeta functions of elliptic
curves.}
\author{Marcus du Sautoy\thanks{%
Mathematical Institute, Andrew Wiles Building, Woodstock Road, Oxford OX2 6GG, UK. dusautoy@maths.ox.ac.uk}}
\maketitle

\begin{abstract} 

We study the analytic behaviour of adelic versions of Igusa integrals given
by integer polynomials defining elliptic curves. 

\medskip
\hrule 

\hspace{1cm} \\

2010 Mathematics Subject Classification: 11M41 
\end{abstract}

\tableofcontents

\section{Introduction}

Let $f(x,y)\in \BZ[x,y]$ be a nonsingular 
integer polynomial defining an elliptic curve
$E$. The Igusa zeta function of $f$ at the prime $p$ is defined as:
\begin{equation}\label{Ig1}
I_{E}(s,p)=\int_{\BZ_{p}^{2}}|f(x,y)|_p^{s}\, d\mu
\end{equation}
where $|\ |_p$ stands for the p-adic absolute value and $d\mu$ for the
Haar-measure on $\BQ_p^2$ normalized such that $\BZ_p^2\subset 
\BQ_p^2$ has measure equal to $1$. 

Let $S(E)$ be the set of those primes $p$ 
for which the reduction of $f(x,y)$ modulo $p$ becomes singular. Note 
that $S(E)$ is a finite set of primes. For the primes $p\notin S(E)$ the
integral (\ref{Ig1}) can be computed as:
\begin{equation}\label{Ig2}
I_{E}(s,p)=(1-p^{-2}C_{p})\left( 1+\lambda _{p}\frac{p^{-(s+1)}}{%
(1-p^{-(s+1)})}\right)
\end{equation}
(see Proposition \ref{comp1}) where the numbers $C_p$ and $\lambda_p$ 
are given by:
\begin{equation}\label{Ig3}
C_{p}=\mathrm{card}\left\{ (x,y)\in\BZ/p\BZ 
\Mid f(x,y)=0\ {\rm mod}\ p\right\}, \qquad
\end{equation}

\begin{equation}\label{Ig4}
\lambda _{p}=\frac{(p-1)C_{p}}{(p^{2}-C_{p})}.
\end{equation}
Following Ono \cite{Ono} we form the Euler-product
\begin{equation}\label{Ig5}
I_{E}(s)=\prod_{p\notin S(E)}I_{E}(s,p)\, (1-p^{-2}C_{p})^{-1}
\end{equation}
which have their constant coefficients equal to $1.$

We call $I_E(s)$ the global Igusa-zeta function of the elliptic curve given by
$f$. Given this notation we show:

\begin{theorem}\label{merotheo}
\begin{itemize}

\item[(i)] The Euler product $I_E(s)$ converges for $s\in\BC$ with $\Re(s) >0$,

\item[(ii)] $I_E(s)$ has a meromorphic continuation to the region 
 $\Re(s) >-3/2$,

\item[(iii)]  (assuming GRH) the line $\Re (s)=-3/2$ is a
natural boundary for $I_{E}(s)$ beyond which no further meromorphic
continuation is possible.

\end{itemize}

\end{theorem}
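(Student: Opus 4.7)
The plan is to present $I_E(s)$ as a finite product of integer powers of $\zeta$ and symmetric power $L$-functions of $E$ (evaluated at affine-in-$s$ arguments), times a remainder Euler product convergent in a strictly larger half-plane, and then use GRH to control the accumulation of zeros of the extracted $L$-functions. Starting from (\ref{Ig2}), the factor at a good prime $p$ in (\ref{Ig5}) simplifies to
\[
\frac{1-\delta_p\,p^{-(s+1)}}{1-p^{-(s+1)}}, \qquad \delta_p := 1-\lambda_p = \frac{p(a_p-1)}{p^2-C_p},
\]
where $a_p := p+1-C_p$ is the trace of Frobenius. The Hasse bound $|a_p|\le 2\sqrt{p}$ gives $\delta_p = O(p^{-1/2})$, so
\[
\log I_E(s) = \sum_p p^{-(s+1)} + O\Bigl(\sum_p p^{-\Re(s)-3/2}\Bigr),
\]
converging absolutely for $\Re(s)>0$, which proves (i). Notice that this same Hasse bound prefigures the eventual boundary: the $\delta_p^{\,m}$ contribution to the log-series is controlled by $p^{m/2}p^{-m(s+2)}$, so the natural limit of absolute convergence for all orders is $\Re(s)>-3/2$.

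For (ii) I expand $\delta_p$ as a Laurent series in $1/p$ with polynomial-in-$a_p$ coefficients, and convert $a_p$-monomials to symmetric power Frobenius traces via $a_p^{\,m} = \sum_{i=0}^{\lfloor m/2\rfloor}\binom{m}{i}\,p^{\,i}\,p_{m-2i}$ and $p_k = h_k - p\,h_{k-2}$, where $p_k = \alpha_p^k+\beta_p^k$ and $h_j = \mathrm{tr}(\mathrm{Sym}^j\mathrm{Frob}_p)$. I then match Euler factors (not merely their logarithms): factor $\zeta(s+1)$ out of the denominator, then $L(E,s+2)^{-1}$ to cancel the leading residue $a_p p^{-(s+2)}$, then a $\zeta$-type factor to clean up the residual $p^{-(s+2)}$, and so on, pushing the leading term of the running residue one further power of $1/p$ at each stage. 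With careful bookkeeping the exponent of each extracted $L$-function is an \emph{integer}, and after $K$ stages one obtains
\[
I_E(s) = \prod_{j=1}^{N_K} L_j(\sigma_j(s))^{n_j}\cdot H_K(s),
\]
with $L_j\in\{\zeta,L(E,\cdot),L(\mathrm{Sym}^\ell E,\cdot)\}$, $\sigma_j$ affine, $n_j\in\BZ$, and $H_K$ absolutely convergent for $\Re(s)>-3/2+\epsilon_K$ where $\epsilon_K\to 0$. Combined with the meromorphic continuation of each $L(\mathrm{Sym}^\ell E,\cdot)$ (unconditional via the symmetric-power automorphy results of Newton--Thorne and earlier work), this proves (ii).

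For (iii), GRH places all non-trivial zeros of $L(\mathrm{Sym}^\ell E,\sigma)$ on $\Re(\sigma)=(\ell+1)/2$, so the extracted factor $L(\mathrm{Sym}^\ell E,\sigma_j(s))$ contributes a vertical ``zero line'' $\Re(s)=r_j$ solving $\sigma_j(r_j)=(\ell+1)/2$. A direct inspection of the iterative extraction shows that $\{r_j\}_{j}$ is cofinal at $-3/2$ from the right. By the Riemann--von Mangoldt formula each line carries infinitely many zeros dense in the imaginary direction, so the union is dense in $\Re(s)=-3/2$; with the integer multiplicity $n_j\ne 0$ each such zero is a genuine zero or pole of $I_E(s)$. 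Any hypothetical meromorphic continuation past a boundary point $-3/2+it_0$ would admit only isolated singularities there, contradicting dense accumulation — the standard Estermann--Dahlquist mechanism.

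The principal obstacle is the tension between fractional log-coefficients and integer Euler-factor exponents. A naive term-by-term log-expansion of the Euler product assigns the fractional coefficient $-1/m$ to $\log L(\mathrm{Sym}^m E,m(s+2))$, suggesting a branch point; but this is silently absorbed by higher-$k$ parts of previously extracted $L$-functions (for instance the $k=2$ part of $\log L(E,s+2)$ accounts for the entire $a_p^{\,2}p^{-2(s+2)}$-content, making the net exponent of $L(\mathrm{Sym}^2 E,2s+4)$ exactly $0$), and analogous cancellations occur at every order. Verifying this absorption combinatorially is the technical crux of (ii). The counterpart for (iii) is a \emph{non-cancellation lemma}: among the infinitely many $L$-factors whose zero lines approach $-3/2$, the integer multiplicities $n_j$ must be non-zero for a cofinal subsequence of stages, so that the GRH zeros persist as singularities of $I_E(s)$ rather than being annihilated by adjacent extractions.
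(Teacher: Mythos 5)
Your parts (i) and (ii) follow essentially the paper's route. For (i), the paper likewise reduces to a sum comparable to $\sum_p p^{-(s+1)}$ after noting $\lambda_p - 1 = O(p^{-1/2})$. For (ii), the paper's engine is exactly the iterative extraction you sketch, but made explicit: Lemma~\ref{deco1} gives a ``cyclotomic expansion'' of $1-uY-vY$ over the relation ring $R = \BZ\langle u,v,X,Y\mid uv=X\rangle$, producing integer exponents $c(w)\ge 0$ and signs $\varepsilon(w)\in\{\pm1\}$ such that
\[
1-uY-vY = \prod_{w\in{\cal L}_M}\bigl(P^{[\varepsilon(w)]}_w\bigr)^{c(w)} + W_M,
\]
with all surviving monomials in $W_M$ of ``weight'' $\ge 1/2 + 1/M$. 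Substituting $(u,v,X,Y)=(\pi_p,\overline{\pi_p},p,p^{-s})$ turns each $P^{[\varepsilon(w)]}_w$ into a local factor of a shifted $Z_r$, which Proposition~\ref{Zmero} identifies (via the recursions (\ref{recpolysg})--(\ref{recpolysug})) as a finite product/quotient of symmetric-power $L$-functions, hence meromorphic by Theorem~\ref{CS1}. This is precisely the ``integer-exponent bookkeeping'' and ``absorption of fractional log-coefficients'' you flag as the technical crux; the paper resolves it constructively via the identity (\ref{U identity}) inside Lemma~\ref{deco1} rather than by examining logarithms. So (ii) is the same argument, with the paper supplying the combinatorics your proposal defers.

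Your (iii), however, takes a genuinely different route from the paper, and the route you choose carries a gap that the paper's route avoids. You argue that the natural boundary arises because the GRH zero-lines of the extracted factors $Z_r(m(s+2)-n)$ sit at $\Re(s) = -3/2 + 1/(2m)$ and accumulate at $-3/2$; you then need a \emph{non-cancellation lemma} asserting that a cofinal subsequence of these factors appears with nonzero net exponent $n_j\ne 0$ and survives cancellation against the convergent remainder. You acknowledge this lemma but do not prove it, and it is not obvious: the signs $\varepsilon(w)$ can in principle flip, the remainder $\prod_p(1+W_M/Q_M)$ has its own local zeros near $-3/2$, and for any fixed $M$ only finitely many of your zero-lines are in scope. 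The paper instead works prime by prime: it shows (Theorem~\ref{dense}) that for infinitely many $p$ the \emph{local} Euler factor $1 - a_p p^{-s}/(p^2-C_p)$ has zeros at $s_p + (\theta_p + 2\pi n)i/\log p$ with $s_p = -3/2 + \log r_p/\log p \to -3/2^+$, and these realize a dense set of limit points on $\Re(s)=-3/2$. GRH is then used in the opposite direction from yours: not to locate the zeros on the boundary, but to rule out the possibility that a GRH zero or a pole of some $Z_r(ms+n)$ sits exactly on top of a local zero and cancels it (the paper's rationality argument: at a cancelling point $p^{-s'}$ would have to be simultaneously a rational number and of the form $p^{-(1/2m - 3/2 + \gamma i/m)}$, forcing $m=1$ and hence $\Re(s')=-1$, well away from $-3/2$). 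The two mechanisms are complementary — yours is the classical Estermann mechanism on the $L$-function side, the paper's is the ``local zeros'' mechanism from du Sautoy--Woodward — but if you want to use yours you must actually supply the non-cancellation lemma, which the paper's choice of mechanism renders unnecessary.
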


In the case that $E$ has no complex multiplication, the result depends on the recent proof of the Sato-Tate conjectures together with the associated proof about the meromorphic continuation of symmetric power $L$-functions (see \cite{CHT}, \cite{HS-BT} and \cite{Taylor}).

Interest in proving such a theorem arose from considerations of meromorphic continuation and natural boundaries for zeta functions of groups and rings. \cite{duS-Natural boundary}.

Grunewald, Segal and Smith introduced the notion of the zeta function of a
group $G$ in \cite{GSS}: 
\[
\zeta _{G}^{\leq }(s)=\sum_{H\leq G}|G:H|^{-s}=\sum_{n=1}^{\infty
}a_{n}^{\leq }(G)n^{-s} 
\]
where $a_{n}^{\leq }(G)$ denotes the number of subgroups of index $n$ in $G.$

They proved that for finitely generated, torsion-free nilpotent
groups the global zeta function can be written as an Euler product of local
factors which are rational functions in $p^{-s}:$%
\begin{eqnarray*}
\zeta _{G}^{\leq }(s) &=&\prod_{p\text{ prime}}\zeta _{G,p}^{\leq }(s) \\
&=&\prod_{p\text{ prime}}Z_{p}^{\leq }(p,p^{-s})
\end{eqnarray*}
where for each prime $p,$ $\zeta _{G,p}^{\leq }(s)=\sum_{n=0}^{\infty
}a_{p^{n}}^{\leq }(G)p^{-ns}$ and $Z_{p}^{\leq }(X,Y)\in \Bbb{Q}(X,Y).$

Similar definitions and results were also obtained for the zeta function $%
\zeta _{G}^{\triangleleft }(s)$ counting normal subgroups. Analogous results also hold for zeta functions counting ideals or subalgebras in finite dimensional Lie algebras.

In \cite{duSG-analytic} the author and Grunewald were able to prove that these zeta functions have some meromorphic continuation beyond their radius of convergence. 

\begin{theorem}
Let $G$ be a finitely generated nilpotent
group. Then the abscissa of convergence $\alpha(G)$ of $\zeta_G(s)$ is a
rational number and $\zeta_G(s)$ has a meromorphic continuation to
the right half-plane ${\rm Re}(s)>\alpha(G)-\delta$ for suitable $\delta >0$.
\end{theorem}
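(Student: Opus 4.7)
The plan is to reduce $\zeta_{G,p}(s)$ to a $p$-adic ``cone integral'' attached to a system of $\BZ$-polynomials, apply Hironaka's resolution of singularities to get a uniform Denef-type expression, then analyse the Euler product via Deligne's bounds on point counts. First I would use the Mal'cev/Lie correspondence for nilpotent groups to realise counting subgroups of $p$-power index as a problem of counting $\BZ_p$-subalgebras of a fixed Lie algebra $L$ over $\BZ$, thereby writing, for all but finitely many $p$,
\[
\zeta_{G,p}(s) = (1-p^{-1})^{-r}\int_{V_p} |f_0(x)|_p^{s}\prod_{i=1}^{m}|f_i(x)|_p^{a_i}\,d\mu,
\]
for explicit polynomials $f_0,\dots,f_m\in\BZ[X_1,\dots,X_n]$ and a $p$-adic analytic set $V_p\subset\BZ_p^n$ defined by the same data as $L$.

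Second, I would invoke Hironaka resolution of the divisor $D=\bigcup_{i=0}^m\{f_i=0\}$ over $\BQ$, producing a smooth model $Y\to\BA^n$ whose exceptional divisor has simple normal crossing components $\{E_t\}_{t\in T}$ with numerical data $(N_{t,i},\nu_t)$ encoding the multiplicities of each $f_i$ and of the Jacobian along $E_t$. For almost all $p$ Denef's explicit formula then gives
\[
\zeta_{G,p}(s)=\sum_{I\subseteq T} c_{p,I}\,\prod_{t\in I}\frac{(p-1)p^{-A_t(s)}}{1-p^{-A_t(s)}},
\]
where each $A_t(s)=\alpha_t s + \beta_t$ is linear with rational coefficients depending only on the resolution, and $c_{p,I}=|\widetilde{E}_I^{\circ}(\BF_p)|$ is the number of $\BF_p$-points on the stratum $\bigcap_{t\in I}\widetilde{E}_t\setminus\bigcup_{t\notin I}\widetilde{E}_t$ of the reduction $\widetilde{Y}_{\BF_p}$.

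Third, I would expand each local factor as a formal power series in $p^{-s}$ and regroup the Euler product by monomials $\prod p^{-(a_i s+b_i)}$. Using the Lang--Weil/Deligne estimate $c_{p,I}=p^{d_I}+O(p^{d_I-1/2})$, the ``leading'' part of the product factors, after some combinatorial bookkeeping of the cone data, as a finite product of Riemann zeta translates
\[
W(s)=\prod_{j=1}^{k}\zeta(A_j s+B_j)^{e_j},\qquad A_j,B_j\in\BQ,\ e_j\in\BZ,
\]
whose rightmost pole determines $\alpha(G)=\max_j(1-B_j)/A_j\in\BQ$. The ratio $\zeta_G(s)/W(s)$ then corresponds to ``higher cohomology'' Euler products whose local factors are controlled by Frobenius eigenvalues of weight $<2d_I$; these converge absolutely in a strictly larger half-plane $\Re(s)>\alpha(G)-\delta$, giving the required meromorphic continuation.

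The main obstacle is the third step: the formal expansion of the local product into monomial pieces produces infinitely many candidate zeta factors, and one must show that only finitely many contribute to the principal part while all the remaining pieces, after incorporating the Deligne error terms on $c_{p,I}$, reassemble into an Euler product that is not merely convergent at $s=\alpha(G)$ but continues holomorphically across it. This requires a careful uniform estimate of the tail, handling simultaneously the combinatorics of subsets $I\subseteq T$, the weight filtration on the $\ell$-adic cohomology of the strata $\widetilde{E}_I^{\circ}$, and the possible vanishing of numerators that would otherwise shift the abscissa of convergence.
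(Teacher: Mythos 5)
The paper you are reading does not prove this theorem; it is quoted verbatim from du~Sautoy and Grunewald, \emph{Analytic properties of zeta functions and subgroup growth}, Ann.\ of Math.\ \textbf{152} (2000), and cited as background. Your sketch nevertheless reproduces the actual strategy of that reference quite faithfully: passing via the Mal'cev correspondence to a $p$-adic integral, invoking Hironaka resolution to obtain a Denef-type explicit formula uniform in $p$, and then separating the Euler product into a finite product of translated Riemann zetas (whose rightmost pole gives the rational abscissa) times an Euler product of lower weight that converges past $\alpha(G)$. The one framing difference worth noting is that du~Sautoy--Grunewald package the second and third steps through their notion of a \emph{cone integral} with explicit \emph{cone integral data} $(N_{t,i},\nu_t)$ and reduce the combinatorics you flag as the ``main obstacle'' to a careful analysis of the rational cones generated by these data, together with Lang--Weil rather than full Deligne bounds for the point counts on the strata $\widetilde{E}_I^\circ$; this is precisely the uniform tail estimate you identify as the crux, so your diagnosis of where the real work lies is accurate.
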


The challenge was then to determine how far one could meromorphically continue such functions or whether at some point you encountered a natural boundary beyond which no continuation was possible. Many of the early examples of calculations of these functions revealed a uniform behaviour of the rational functions as one varied the prime $p$. In particular the zeta function of a nilpotent group $G$ is called uniform if there exists $W(X,Y)\in\BQ(X,Y)$ such that for almost all primes $p$
\[
\zeta _{G,p}^{\leq }(s)=W(p,p^{-s}).
\]

In Chapter 5 of \cite{duS-Natural boundary} given a two variable polynomial $W(X,Y),$ we provided a criterion for the
Euler product $\prod_{p\text{ prime}}W(p,p^{-s})$ to have a natural boundary
beyond which one cannot continue the function meromorphically. This
partially answered a conjecture characterizing those polynomials which do
admit meromorphic continuation to the whole complex plane, generalizing a
result of Estermann's for one variable polynomials \cite{Estermann}.

However in \cite{duSElliptic1} and \cite{duSElliptic2} we proved that not all finitely generated nilpotent groups have uniform zeta functions. In particular we proved the following:  

\begin{theorem}
Let $L$ be the class two nilpotent Lie algebra over $\Bbb{Z}$ of dimension 9
as a free $\Bbb{Z}$-module given by the following presentation: 
\[
L=\left\langle 
\begin{array}{c}
x_{1},x_{2},x_{3},x_{4},x_{5},x_{6},y_{1},y_{2},y_{3}:(x_{1},x_{4})=y_{3},(x_{1},x_{5})=y_{1},(x_{1},x_{6})=y_{2}
\\ 
(x_{2},x_{4})=y_{1},(x_{2},x_{5})=y_{3},(x_{3},x_{4})=y_{2},(x_{3},x_{6})=y_{1}
\end{array}
\right\rangle 
\]
where all other commutators are defined to be 0. Let $E$ be the elliptic
curve $Y^{2}=X^{3}-X.$ Then there exist two non zero rational functions $%
P_{1}(X,Y)$ and $P_{2}(X,Y)\in \Bbb{Q}(X,Y)$ such that for almost all primes 
$p$: 
\[
\zeta _{L_{p}}^{\triangleleft }(s)=P_{1}(p,p^{-s})+|E(\Bbb{F}%
_{p})|P_{2}(p,p^{-s}).
\]
\end{theorem}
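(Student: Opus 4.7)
The strategy is to follow the Grunewald--Segal--Smith method of expressing the local ideal zeta function as a $p$-adic cone integral and then to evaluate it using the specific structure constants of $L$. Because $L$ is nilpotent of class two with derived subalgebra $L' = Z(L) = \langle y_1,y_2,y_3\rangle$, any ideal $I$ of $L_p := L \otimes_{\BZ}\BZ_p$ is determined by the pair $(\bar I, J)$ where $\bar I \subseteq L_p/L_p' \cong \BZ_p^6$ is its image in the abelianisation and $J := I \cap L_p' \subseteq \BZ_p^3$, subject to the single constraint $[\bar I, L_p] \subseteq J$. Parametrising $\bar I$ and $J$ by Hermite normal forms with diagonal entries $p^{a_1},\ldots,p^{a_6}$ and $p^{b_1},p^{b_2},p^{b_3}$, the bracket constraint becomes a system of linear congruences on the off-diagonal entries of the Hermite matrix of $\bar I$ modulo the $p^{b_k}$.

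I would first write out these congruences explicitly. The nonzero brackets $[x_i,x_j] = \sum_k c_{ij}^{k} y_k$ assemble into three $3\times 3$ matrices $M_1,M_2,M_3$ of bilinear forms on $\langle x_1,x_2,x_3\rangle \times \langle x_4,x_5,x_6\rangle$. A short direct calculation shows that the determinant of the generic linear combination $\alpha_3 M_3 + \alpha_1 M_1 + \alpha_2 M_2$ equals, up to sign, $\alpha_1^3 - \alpha_1\alpha_3^2 + \alpha_2^2\alpha_3$, whose dehomogenisation is projectively isomorphic to $Y^2 = X^3 - X$. This is the precise point at which the elliptic curve $E$ enters: the singular locus of the pencil of bracket forms \emph{is} $E$.

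Next, I would sum the number of admissible $\bar I$ for each fixed $(a_i),(b_k)$. After a change of variable that puts the small-pivot rows into diagonal position, the congruence system decouples into two kinds of contribution: a \emph{generic} part, which evaluates to a finite geometric expression in $p^{-s}$ with coefficients polynomial in $p$ and independent of the prime; and an \emph{elliptic} part, in which the reduction mod $p$ of a coordinate triple $(\alpha_1,\alpha_2,\alpha_3)$ is forced to lie on the singular locus of the pencil. The latter factors as a universal rational function in $p$ and $p^{-s}$ multiplied by the number of $\BF_p$-points on that locus, namely $|E(\BF_p)|$ (for $p$ away from the discriminant of $E$). Assembling the geometric series over $(a_i,b_k) \in \BN^9$ then yields the desired split into $P_1(p,p^{-s}) + |E(\BF_p)|\,P_2(p,p^{-s})$.

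The main obstacle will be carrying out this cone decomposition cleanly enough that the split into two \emph{universal} rational functions $P_1,P_2 \in \BQ(X,Y)$ is visible. Two technical issues arise. First, one must sum the resulting multi-variable geometric series with all divisibility constraints baked in, without introducing case distinctions that secretly depend on $p$. Second, one must verify that the only prime-dependent invariant surviving the summation is $|E(\BF_p)|$ itself, rather than point-counts on some auxiliary higher-degree cover or more general Frobenius traces. Excluding the finitely many primes of bad reduction for $E$ together with $p=2$ (where the defining pencil degenerates) ensures that the combinatorial type of the resolution of the singular locus is the same for all remaining primes, so that all prime-dependence is isolated in the single factor $|E(\BF_p)|$, giving the claimed formula for almost all $p$.
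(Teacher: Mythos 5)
First, note that the paper you are commenting on does not prove this theorem: it is quoted as a known result, with the proof living in \cite{duSElliptic1} and \cite{duSElliptic2}. Your outline does follow the strategy of those papers: reduce $\zeta^{\triangleleft}_{L_p}(s)$ to a lattice sum \`a la Grunewald--Segal--Smith, encode the bracket by the pencil of $3\times 3$ matrices $M_1,M_2,M_3$, and observe that $\det(\alpha_1M_1+\alpha_2M_2+\alpha_3M_3)=\alpha_1\alpha_3^2-\alpha_1^3-\alpha_2^2\alpha_3$ cuts out a plane cubic projectively isomorphic to $Y^2=X^3-X$. That determinant computation is correct and is indeed the point where $E$ enters.

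There are, however, two problems. The minor one: an ideal $I$ of $L_p$ is \emph{not} determined by the pair $(\bar I, J)$; for fixed $\bar I\leq L_p/L_p'$ and $J=I\cap L_p'$ with $[\bar I,L_p]\subseteq J$ there are $|L_p'/J|^{6}$ ideals (the choices of lift of a $\BZ_p$-basis of $\bar I$ modulo $J$), and this factor must be carried through the sum --- it is uniform in $p$, so it does not disturb the final shape, but as stated your parametrisation undercounts. The major one: the entire content of the theorem is the claim you defer to ``the main obstacle,'' namely that after summing over Hermite normal forms the prime-dependence isolates into the single quantity $|E(\BF_p)|$ multiplying a rational function $P_2(p,p^{-s})$ that is \emph{independent of $p$}, with nothing else (no point counts on auxiliary strata of the determinantal surface, no dependence on how $E(\BF_p)$ sits inside $\BP^2(\BF_p)$) surviving. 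In \cite{duSElliptic1} and \cite{duSElliptic2} this is exactly where the work is: one must stratify the cone conditions by the elementary divisor type of $\alpha_1M_1+\alpha_2M_2+\alpha_3M_3$ (equivalently, by the order of vanishing of the determinant along $E$ and the corank of the matrix there), evaluate each stratum, and check that every stratum contributes either a uniform term or a uniform term times $|E(\BF_p)|$. Your proposal asserts this decoupling rather than establishing it, so as a proof it is incomplete precisely at the step that makes the theorem true.
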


Interest therefore turned to whether the global zeta function of this example whose local factors depend on the number of points on an elliptic curve might have natural boundaries beyond which no meromorphic continuation is possible.

Given that the main tool in proving these results is the use of $p$-adic integrals similar to those defining the Igusa zeta function it seemed apposite to first consider the global nature of a function defined as an Euler product of local Igusa zeta functions. Such global functions have received little attention beyond an initial paper by Ono \cite{Ono} where some meromorphic continuation of this global zeta function is demonstrated. This paper can be considered as a continuation of that early paper of Ono's, establishing that in general these global Igusa functions have natural boundaries.

I should like to dedicate this paper to the memory of Fritz Grunewald with whom I discussed many of the ideas at the heart of this paper and beyond.

\section{Some polynomial identities}\label{polyid}

This section contains several polynomial identities which will be important
for the meromorphic continuation of the global Igusa zeta functions.

We write $\BN$ for the natural numbers and $\BN_0$ with zero included. 
Let ${\cal L}$ be the set of $3$-tuples $(r,n,m)\in \BN_0^{2}\times \BN$ 
with the property that $\left( r+2n\right) /2m=1/2$ or equivalently
with $r+2n=m$. 
For each $M\in\BN$ let ${\cal L}_{M}$ denote the finite set
of $(r,n,m)\in {\cal L}$ for which $m<M.$ 
For $(r,n,m)\in {\cal L}$ we define its weight to
be: 
\begin{equation}\label{wei}
\Theta(r,n,m):=\frac{r+2n+2}{2m}=\frac{1}{2}+\frac{1}{m}.
\end{equation}

We consider here the free commutative $\BZ$-algebra $R$ 
generated by $u,\, v,\, X,\, Y$ with the single relation $uv=X$, that is  
\begin{equation}\label{ring}
R:=\BZ\langle\,  u,\, v,\, X, \, Y\Mid uv=X\, \rangle.
\end{equation}
Every monomial in $u,\, v,\, X,\, Y$ in $R$ is equal to 
$u^{r_1}v^{r_2}X^{n}Y^{m}$ with $r_1,\, r_2,\, n,\, m\in \BN_0$ and with at
least one of the $r_1,\, r_2$ equal to zero. 
Every $Q\in R$ which is symmetric in $u,\, v$ can be written as 
\begin{equation}\label{deco0}
Q=\sum_{(r,n,m)\in\BN_0^3} a_{Q,(r,n,m)}(u^r+v^r)X^nY^m
\end{equation}
with uniquely determined coefficients 
$a_{Q,(r,n,m)}\in\BZ$, $a_{Q,(r,n,m)}=0$ for almost all $(r,n,m)\in\BN_0^3$.

For $(r,n,m)\in {\cal L}$ and $\varepsilon\in\{\pm 1\}$ 
with $r\ne 0$ we define:
\begin{equation}\label{poly}
P^{[\varepsilon]}_{(r,n,m)}:=1-\varepsilon(u^r+v^r)X^nY^m+X^{r+2n}Y^{2m}
\end{equation}
For $(r,n,m)\in {\cal L}$ with $r= 0$ and $\varepsilon\in\{\pm 1\}$ 
we define:
\begin{equation}\label{poly2}
P^{[\varepsilon]}_{(0,n,m)}:=1-\varepsilon X^{n}Y^{m}
\end{equation}

We establish now the following cyclotomic expansion of $1-uY-vY.$

\begin{lemma}\label{deco1}
For $w=(r,n,m)\in {\cal L}$ there exist integers $c(w)\in \BN_0$ and 
$\varepsilon{(w)}\in \{-1,1\}$ with the property that for 
$M\in \BN$
$$1-uY-vY=\left( \prod_{w\in {\cal L}_M} 
(P^{[\varepsilon(w)]}_{w})^{c(w)}\right) + W_M(u,v,X,Y)$$
where $W_M$ is symmetric with respect to $u,\, v$ and $(r,n,m)\in 
{\cal L}\setminus {\cal L}_M$ for every $(r,n,m)\in \BN_0^3$ for which the
coefficient $a_{W_M,(r,n,m)}$ from (\ref{deco0}) is non-zero.
\end{lemma}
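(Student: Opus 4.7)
\emph{Proof proposal.} The plan is to prove the lemma by induction on $M$, constructing $\Pi_M := \prod_{w\in{\cal L}_M}(P^{[\varepsilon(w)]}_w)^{c(w)}$ layer-by-layer so that the factors added at step $M$ exactly kill the lowest-$Y$-degree terms of the current remainder $W_M := (1-uY-vY)-\Pi_M$. A useful preliminary is the bigrading on $R$ given by $\deg u = \deg v = 1$, $\deg X = 2$ (consistent with $uv=X$), and a separate $\deg_Y Y = 1$. For any $w = (r,n,m)\in{\cal L}$ the defining relation $r+2n=m$ is precisely the equality $\deg=\deg_Y$ for the monomials $(u^r+v^r)X^nY^m$ and $X^nY^m$. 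Each term of $P^{[\varepsilon]}_w$---namely $1$, $-\varepsilon(u^r+v^r)X^nY^m$, and $X^mY^{2m}$---satisfies $\deg=\deg_Y$; so does $1-uY-vY$. Since this condition is preserved by products, sums and differences, every $\Pi_M$ and every $W_M$ produced by the construction has support inside ${\cal L}\cup\{(0,0,0)\}$, and symmetry in $u,v$ is preserved for the same reason.

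For the induction, start with $\Pi_1 = 1$, $W_1 = -(u+v)Y$: the unique non-zero term of $W_1$ has index $(1,0,1)\in{\cal L}\setminus{\cal L}_1$. Suppose $1-uY-vY = \Pi_M + W_M$ with $W_M$ supported on ${\cal L}\setminus{\cal L}_M$, i.e.\ on monomials of $Y$-degree $\geq M$. For each $w=(r,n,M)\in{\cal L}$, the factor $P^{[\varepsilon]}_w$ expands as $P^{[\varepsilon]}_w = 1 - \varepsilon T_w Y^M + O(Y^{M+1})$, where $T_w := (u^r+v^r)X^n$ for $r\geq 1$ and $T_w := X^n$ for $r=0$. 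Writing the $Y$-degree-$M$ part of $W_M$ as $\sum_{(r,n,M)\in{\cal L}} a_{W_M,(r,n,M)}\, T_w Y^M$, set
\[
c(w) := |a_{W_M,(r,n,M)}|\in\BN_0,\qquad \varepsilon(w) := -\operatorname{sgn}\bigl(a_{W_M,(r,n,M)}\bigr)\in\{\pm 1\}
\]
(with $\varepsilon(w)$ taken arbitrarily when the coefficient vanishes), so that $-c(w)\varepsilon(w) = a_{W_M,(r,n,M)}$. Multiplying $\Pi_M$ by $\prod_{(r,n,M)\in{\cal L}}(P^{[\varepsilon(w)]}_w)^{c(w)}$ then cancels the $Y$-degree-$M$ part of $W_M$ exactly: the new factors equal $1$ modulo $Y^M$ and contribute $-\sum c(w)\varepsilon(w)\,T_wY^M$ at $Y$-degree $M$, while $\Pi_M = 1 + O(Y)$ pushes all other cross-terms to $Y$-degree $>M$. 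The higher-degree debris is absorbed into $W_{M+1}$, whose support sits in ${\cal L}\setminus{\cal L}_{M+1}$ by combining the grading remark with the degree cancellation.

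The one piece of bookkeeping care---more technicality than obstacle---is the $r=0$ case of the decomposition (\ref{deco0}): one must read $(u^0+v^0)X^nY^m$ as $X^nY^m$ rather than $2X^nY^m$, so that the integer coefficients $a_{Q,(r,n,m)}$ are honestly uniquely determined. With that convention the sign/magnitude prescription for $(c(w),\varepsilon(w))$ works uniformly for $r=0$ and $r\geq 1$, and the bigrading then certifies the support condition for $W_M$ at every step of the induction.
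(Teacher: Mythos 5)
Your proof is correct and follows the same inductive construction as the paper: peel off the lowest $Y$-degree stratum of the remainder at each step by introducing the appropriate powers of $P^{[\varepsilon]}_{(r,n,M)}$. The one point where you genuinely diverge is in certifying that the remainder stays supported on ${\cal L}$: the paper expands cross-terms explicitly and invokes the identity $(u^n+v^n)(u^m+v^m)=(u^{n+m}+v^{n+m})+(u^{n-m}+v^{n-m})X^m$, noting that mediants of $1/2$ are $1/2$; you instead observe that $R$ carries a bigrading with $\deg u=\deg v=1$, $\deg X=2$, $\deg_Y Y=1$, that the condition $r+2n=m$ is exactly $\deg=\deg_Y$, and that the diagonal is a subring containing $1-uY-vY$ and every $P^{[\varepsilon]}_w$. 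Both routes work; the bigrading is a bit more conceptual and dispenses with the mediant bookkeeping, at the cost of a small preliminary check that the grading is well-defined modulo $uv=X$. Your explicit treatment of the $r=0$ convention in (\ref{deco0}) and the sign/magnitude recipe for $(c(w),\varepsilon(w))$ is also slightly more careful than the paper's, which only spells out the $r>0$ case.
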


Before we give the proof of the lemma we shall give several examples

\bigskip

\centerline{M=2}
$$1-uY-vY=P^{[1]}_{(1,0,1)}-XY^2,$$
\centerline{M=3}
$$1-uY-vY=P^{[1]}_{(1,0,1)}P^{[1]}_{(0,1,2)}-(u+v)XY^3 + X^2Y^4,$$
\centerline{M=4}
$$1-uY-vY=P^{[1]}_{(1,0,1)}P^{[1]}_{(0,1,2)}P^{[1]}_{(1,1,3)}
+(u^2+v^2)X^2Y^6 $$
$$- (u^2+v^2)XY^4 - (u+v)X^4Y^9 +X^5Y^{10} + X^3Y^6 - X^2Y^4.$$

\bigskip

We note the following identity: for $n\geq m$%
\begin{equation}
(u^{n}+v^n)(u^{m}+v^m)=(u^{n+m}+v^{n+m})+(u^{n-m}+v^{n-m})X^{m}.  \label{U identity}
\end{equation}

\textbf{Proof of Lemma \ref{deco1}} This follows by induction on $M.$ We start the induction by
writing:
\[
1-uY-vY=P^{[1]}_{(1,0,1)}-XY^2. 
\]
To deal with the inductive step we need to show how to remove a term of the
form $a_{r,n,M}(u^{r}+v^r)X^{n}Y^{m}$ in $W_{M}(u,v,X,Y)$ at the expense of
introducing things of the form $a_{r,n,m}(u^{r}+v^r)X^{n}Y^{m}$ with $m>M.$ If $%
r>0,$ then we introduce a new term
\[
\left( 1+(-1)^{\varepsilon _{r,n,M}}(u^{r}+v^r)X^{n}Y^{M}+X^{r+2n}Y^{2M}\right)
^{c_{r,n,M}} 
\]
where $a_{r,n,M}=(-1)^{\varepsilon _{r,n,M}}c_{r,n,M}$ and $c_{r,n,M}>0.$
The error terms that we must introduce into $W_{M+1}(u,v,X,Y)$ are then of the
form
\[
c(u^{r_{1}}+v^{r_{1}})\cdots (u^{r_{l}}+v^{r_{l}})X^{i}Y^{m}
\]
where $m>M.$ We can rewrite this using the identity (\ref{U identity}) as a
sum of terms of the form $c^{\prime }(u^{r}+v^r)X^{n}Y^{m}$ with the property that
$\left( r+2n\right) /2m=1/2.$ (Note that if $a/b=c/d=1/2$ then $%
(a+c)/(b+d)=1/2.)\square $
\bigskip

Using Lemma \ref{deco1} we define for $M\in\BN$

\begin{equation}\label{ku}
Q_M(u,v,X,Y):=\prod_{w\in {\cal L}_M} 
(P^{[\varepsilon(w)]}_{w})^{c(w)}\in R
\end{equation}

Suppose $S$ is a finite set of prime numbers and for every prime $p\notin S$
we are given a $\pi_p\in\BC$ with $\pi_p\overline{\pi_p}=p$. We consider then
the Euler products 
\begin{equation}\label{eule1}
Z^1_M(s):=\prod_{p\notin S} (1+W_M(\pi_p,\overline{\pi_p},p,p^{-s})
\end{equation}

\begin{equation}\label{eule2}
Z^2_M(s):=\prod_{p\notin S} \left(1+\frac{W_M(\pi_p,\overline{\pi_p},p,p^{-s})}
{Q_M(\pi_p,\overline{\pi_p},p,p^{-s})}\right)
\end{equation}

\begin{lemma}\label{eulel1}
Both the Euler products $Z^1_M(s)$ and $Z^2_M(s)$ converge absolutely for
$s\in \BC$ with $\Re(s)>1/2+1/M$
\end{lemma}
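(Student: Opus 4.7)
The plan is to reduce both products to the convergence of a standard $p$-series via a single exponent bookkeeping. The key estimate uses the defining condition $r+2n=m$ of ${\cal L}$: since $|\pi_p|=|\overline{\pi_p}|=p^{1/2}$, any monomial $(u^r+v^r)X^n Y^m$ with $(r,n,m)\in{\cal L}$ satisfies
\[
|(\pi_p^r+\overline{\pi_p}^r)\, p^n\, p^{-ms}|\leq 2\, p^{r/2+n-m\Re(s)}=2\, p^{-m(\Re(s)-1/2)},
\]
so the archimedean size of $\pi_p^r$ is balanced by $X^n=p^n$ and the whole quantity is controlled by $m$ together with the excess $\Re(s)-1/2$.

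For $Z^1_M(s)$, Lemma \ref{deco1} asserts that $W_M$ is a finite $\BZ$-linear combination of such monomials, all with $m\geq M$, so $|W_M(\pi_p,\overline{\pi_p},p,p^{-s})|\leq C\, p^{-M(\Re(s)-1/2)}$ for a constant $C$ depending only on $W_M$. The $p$-series $\sum_p p^{-M(\Re(s)-1/2)}$ converges precisely when $M(\Re(s)-1/2)>1$, i.e.\ $\Re(s)>1/2+1/M$, and this yields absolute convergence of the Euler product.

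For $Z^2_M(s)$ the extra input needed is a uniform lower bound on $|Q_M(\pi_p,\overline{\pi_p},p,p^{-s})|$ for large $p$. The relation $uv=X$ yields the factorisation
\[
P^{[\varepsilon]}_{(r,n,m)}=(1-\varepsilon u^r X^n Y^m)(1-\varepsilon v^r X^n Y^m)\qquad(r>0),
\]
while $P^{[\varepsilon]}_{(0,n,m)}$ is already of this form. Each linear factor, evaluated, is $1-\alpha$ with $|\alpha|\leq p^{-m(\Re(s)-1/2)}\to 0$ as $p\to\infty$, so for $p$ beyond some threshold $|1-\alpha|\geq 1/2$ and hence $|Q_M|\geq c>0$. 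Then $|W_M/Q_M|\leq c^{-1}|W_M|$, and discarding the finitely many small primes (which do not affect absolute convergence) the estimate of the previous paragraph applies verbatim.

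There is no serious obstacle: the only non-routine input is the linear factorisation of $P^{[\varepsilon]}_w$, which is immediate from $uv=X$ after expanding and recognising $u^rv^r=X^r$. The pleasing feature is that the threshold $1/2+1/M$ is precisely the weight $\Theta(r,n,m)=1/2+1/m$ at the smallest admissible value $m=M$, which is exactly what the condition $(r+2n)/2m=1/2$ was engineered to produce.
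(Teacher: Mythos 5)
The paper states this lemma without giving a proof, so there is nothing to compare against directly; your proof is correct and is surely the intended argument. The central computation $r/2 + n - m\Re(s) = -m(\Re(s)-1/2)$ from $r+2n=m$ is exactly right, it shows that every monomial of weight $\Theta = 1/2 + 1/m$ contributes $O(p^{-m(\Re(s)-1/2)})$, and since $W_M$ only involves $m\geq M$ this gives absolute convergence of $Z^1_M$ for $\Re(s)>1/2+1/M$. Your factorisation $P^{[\varepsilon]}_{(r,n,m)}=(1-\varepsilon u^r X^n Y^m)(1-\varepsilon v^r X^n Y^m)$ via $u^r v^r = X^r$ is the right device for bounding $|Q_M|$ away from zero for large $p$ at any fixed $s$ in the half-plane; the finitely many exceptional primes do not affect absolute convergence, and the observation that the threshold $1/2+1/M$ matches the infimum of the weights $\Theta(r,n,m)$ over $m\geq M$ correctly explains why the construction is tight.
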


\bigskip

For $n\in\BN$ we define:
\begin{equation}\label{polys}
S^{[n]}:=\prod_{i=0}^n(1-u^{i}v^{n-i}Y)
\end{equation}

We have the recursion relations:
\begin{equation}\label{recpolysg}
S^{[2n]}=P^{[1]}_{(0,n,1)}\cdot \prod_{i=1}^nP^{[1]}_{2i,n-i,1}
\end{equation}

We have the recursion relations:
\begin{equation}\label{recpolysug}
S^{[2n-1]}=\prod_{i=1}^nP^{[1]}_{2i-1,n-i,1}
\end{equation}

\section{Some meromorphic functions\label{section: meromorphic functions}}

This section accumulates certain facts about the analytic properties of
of L-functions of Hecke-Groessencharacters and for symmetric power
L-functions

\subsection{L-functions of Hecke-grossencharacters}\label{gross}

Our basic reference for the 
terminology and theory of quasi-characters of Idele-class groups
is \cite{Lang}.

Let $K\subset\BC$  be an imaginary quadratic number field, 
$\BA_K^*$ its group of ideles and $J_K:=K^*\backslash \BA_k^*$ its group of
idele classes. We identify the set of finite places of $K$ with the set of
prime ideals in the ring of integers ${\cal O}$ of $K$. 
Given such a prime ideal $\wp$ we choose 
a generator $\pi_\wp$ of the maximal ideal of the completion ${\cal
  O}_\wp$. We define $[\wp]\in J_K$ to be the class of the idele which is $1$
at all places except for $\wp$ where it is  $\pi_\wp$.

Let
\begin{equation}
\chi: J_K\to \BC^* 
\end{equation}
be a (continuous) quasi-character and  $S(\chi)$ the set of non-archimedian
places where $\chi$ is ramified. The Hecke L-series of $\chi$ is defined as

\begin{equation}
L(\chi,s):=\prod_{\wp\notin S(\chi)} \, (1-\chi([\wp])N(\wp)^{-s})^{-1}
\end{equation}

\begin{proposition}\label{grossprop} 
Let $\chi$ be a quasi-character of the idele-class group with the property
that $|\chi([\wp])|={N(\wp)}^{\frac{1}{2}}$ for all $\wp\notin S(\chi)$.
Assume also that $r$ is a natural number.
The following hold: 
\begin{itemize}

\item[(i)] The L-function $L(\chi^r,s)$ converges for $\Re(s)>2r$.

\item[(ii)] The L-function $L(\chi^r,s)$ extends to a holomorphic function on
  all of $\BC$.

\item[(iii)] The zeroes of the extended L-function  $L(\chi^r,s)$ in 
$\{\, s\in\BC\Mid \Re(s)<0\,\}$ lie on the real axis.
\end{itemize}

\end{proposition}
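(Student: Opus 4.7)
The plan is to reduce each of (i)--(iii) to standard facts about Hecke Grossencharacter $L$-functions, as developed by Hecke and reformulated adelically by Tate, since a quasi-character $\chi$ of the imaginary quadratic idele class group with $|\chi([\wp])|=N(\wp)^{1/2}$ is precisely a Grossencharacter of ``weight $1/2$''.

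For (i), the hypothesis gives $|\chi^r([\wp])|=N(\wp)^{r/2}$. Each local factor is therefore dominated by a convergent geometric series for $\Re(s)>r/2$, and termwise comparison of the logarithmic series for $L(\chi^r,s)$ against the Dedekind series $\sum_{\wp}N(\wp)^{-(s-r/2)}$ gives absolute convergence in $\Re(s)>r/2+1$. Since $2r\ge r/2+1$ for every $r\ge 1$, the product converges absolutely on the stated region $\Re(s)>2r$.

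For (ii), I would invoke Hecke's theorem (equivalently, Tate's thesis) on $L$-series of Grossencharacters. Writing $\chi_0$ for the unitary twist of $\chi$ (so that $|\chi_0|=1$ on all ideles), one has $L(\chi^r,s)=L(\chi_0^r,s-r/2)$. Hecke's theorem asserts that $L(\chi_0^r,\cdot)$ extends to an entire function of $s$ provided $\chi_0^r$ is not a principal quasi-character, i.e.\ not a power of the idele norm; otherwise one would obtain only a shifted Dedekind zeta function with a single simple pole. In the setting relevant to this paper, $\chi$ is the Hecke character of a CM elliptic curve $E$ with CM by $\mathcal{O}$, which has infinity type $(1,0)$; hence $\chi^r$ has infinity type $(r,0)$, is ramified at the archimedean place for every $r\ge 1$, and no power $\chi_0^r$ is principal. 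The entire continuation then follows.

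For (iii), I would use the functional equation for $L(\chi^r,s)$. There is a completed $L$-function
\begin{equation*}
\Lambda(\chi^r,s)=A^{s/2}\,\Gamma_{\BC}(s+c)\,L(\chi^r,s),\qquad \Gamma_{\BC}(s):=(2\pi)^{-s}\Gamma(s),
\end{equation*}
for a positive real conductor $A$ and a real constant $c$ determined by the infinity type of $\chi^r$, satisfying Hecke's functional equation $\Lambda(\chi^r,s)=\varepsilon(\chi^r)\,\Lambda(\overline{\chi}^{\,r},r+1-s)$. Part (i), applied to $\overline{\chi}^{\,r}$, shows that $L(\overline{\chi}^{\,r},r+1-s)$ is an absolutely convergent, and hence non-vanishing, Euler product whenever $\Re(r+1-s)>r/2+1$, i.e.\ whenever $\Re(s)<r/2$. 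In this half-plane any zero of $L(\chi^r,s)$ must cancel a pole of $\Gamma_{\BC}(s+c)$; these poles sit at $s=-c,-c-1,-c-2,\ldots$, all real. Since $\{\Re(s)<0\}\subset\{\Re(s)<r/2\}$, every zero of $L(\chi^r,s)$ in the left half-plane lies on the real axis, proving (iii).

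The main obstacle is pinning down the exact shape of the functional equation, in particular the gamma-factor shift $c$, and verifying cleanly that $\chi^r$ is not a principal quasi-character for any $r\ge 1$. Both are routine inside the Hecke/Tate framework, but the normalisations must be matched to those used elsewhere in the paper; any deviation would only shift the trivial zeros along the real axis without affecting the realness that assertion (iii) requires.
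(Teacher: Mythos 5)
The paper gives no proof of this proposition; it is stated as a standard fact about Grossencharacter $L$-functions, with Lang's book cited as the background reference. Your proposal correctly assembles the standard Hecke/Tate argument for all three parts: comparison with a shifted Dedekind series for (i), Hecke's entirety theorem for (ii), and the functional equation combined with non-vanishing of the Euler product in the reflected half-plane for (iii). You also catch a genuine subtlety the paper glosses over: (ii) is not true for an arbitrary quasi-character $\chi$ with $|\chi([\wp])|=N(\wp)^{1/2}$, since if the unitary twist $\chi_0$ has finite order $d$ then $\chi^d$ is a power of the norm and $L(\chi^d,s)$ is a shifted $\zeta_K$ with a pole; you correctly observe that in the only case where the proposition is applied --- $\chi=\chi_E$ the Hecke character of a CM elliptic curve, of infinity type $(1,0)$ --- no power $\chi_E^r$ is principal, so (ii) does hold. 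Your sharper convergence bound $\Re(s)>r/2+1$ in (i) is correct, and the stated $\Re(s)>2r$ follows a fortiori. The one small point left implicit: the paper's $L(\chi^r,s)$ omits the finitely many ramified Euler factors, so the completed $\Lambda$ in your part (iii) must either reinstate them or you should note that the omitted local factors are entire and nonvanishing in the relevant half-plane; either way the realness conclusion in (iii) is unaffected.
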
 

These functions will be sufficient to do meromorphic continuation of our global Igusa zeta function in the case that the elliptic curve has complex multiplication. Otherwise we need the following section.

\subsection{Symmetric power L-functions}\label{power}

We report here on the recent progress proving results about the meromorphic properties of the symmetric
power L-functions (see \cite{CHT}, \cite{HS-BT} and \cite{Taylor}). We also refer to \cite{Sha} for another good reference.  

We write $\BA$ for the ring of adeles of $\BQ$. Let further $S$ be a finite
set of places including the archimedian place. We shall identify the
non-archimedian places with the corresponding prime numbers. 
Let $\omega$ be a cuspidal
representation of ${\bf GL}(2,\BA)$. Assume that the associated local
representation $\omega_p$ is of class $1$ and also tempered 
for all $p\notin S$. For  
$p\notin S$ let
\begin{equation}\label{cpm}
t_p=\left( \begin {array}{cc}
                \alpha_p& 0 \\
                0 & \beta_p
        \end {array} \right) \in {\bf GL}(2,\BC)
\end{equation}
be the conjugacy class parametrizing $\omega_p$. We use here the normalisation
chosen in \cite{Sha}, Section 5, that is $|\alpha_p|=|\beta_p|=1$ and
$\alpha_p\beta_p=1$. 

Let $m$ be a natural number. For $p\notin S$ the 
local factor of the $m$-th symmetric power
L-function attached to $\omega$ is defined by:
\begin{equation}\label{symmi}
L(s,\omega_p,m):=\prod_{i=0}^m(1-\alpha_p^i\beta_p^{m-i}p^{-s})^{-1}
\end{equation}
The (partial) global $m$-th symmetric power
L-function is defined as the Euler product:

\begin{equation}\label{symmig}
L(s,\omega,m):=\prod_{p\notin S} L(s,\omega_p,m)
\end{equation}

The Dirichlet series $L(s,\omega,m)$ converges for $\Re (s)>1$. The results of \cite{CHT}, \cite{HS-BT} and \cite{Taylor} prove the following: 
\medskip

\begin{theorem}\label{CS1} $L(s,\omega,m)$ has a meromorphic continuation to all of
  $\BC$ for every $m\in\BN$.
  
\end{theorem}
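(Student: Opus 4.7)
The plan is to realise $L(s,\omega,m)$ as the $L$-function of a compatible system of Galois representations and then invoke a potential automorphy theorem. First, attach to $\omega$ the compatible system $\{\rho_{\omega,\ell}\}_\ell$ of two-dimensional $\ell$-adic representations of $G_{\BQ}$ whose Frobenius characteristic polynomial at each $p\notin S$ encodes the Satake parameters $\alpha_p,\beta_p$ of $\omega_p$. Its $m$-th symmetric power $\rho_m := \mathrm{Sym}^m \rho_{\omega,\ell}$ is then a compatible system whose $L$-function agrees with $L(s,\omega,m)$ at every $p\notin S$, so it suffices to continue $L(s,\rho_m)$.

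The central input supplied by \cite{CHT}, \cite{HS-BT}, \cite{Taylor} is the following potential automorphy statement: there exist a finite totally real Galois extension $F/\BQ$ and a cuspidal automorphic representation $\Pi$ of $\mathbf{GL}(m+1,\BA_F)$ such that $\rho_m|_{G_F}$ is the Galois representation attached to $\Pi$. By Godement-Jacquet, the standard $L$-function $L(s,\Pi)$ admits meromorphic continuation to all of $\BC$.

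To descend from $F$ back to $\BQ$, I would use Brauer's induction theorem to write the trivial character of $G := \mathrm{Gal}(F/\BQ)$ as a $\BZ$-linear combination $\mathbf{1}_G = \sum_i n_i \, \mathrm{Ind}_{H_i}^G \chi_i$ of characters induced from one-dimensional characters of cyclic (hence solvable) subgroups $H_i \leq G$. Combining this with the multiplicativity of $L$-functions under induction, and with Arthur-Clozel cyclic base change to realise each twist $\rho_m|_{G_{F^{H_i}}} \otimes \chi_i$ as automorphic over the intermediate field $F^{H_i}$, one expresses $L(s,\omega,m)$ as a finite product of meromorphic $L$-functions raised to integer exponents, which is itself meromorphic on $\BC$.

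The main obstacle is the potential automorphy step. Establishing it requires an $m$-dimensional Taylor-Wiles-Kisin modularity lifting theorem for essentially self-dual $\ell$-adic representations, the construction (via a Moret-Bailly style argument) of an auxiliary compatible system congruent to $\rho_m$ modulo some prime $\ell$ that is known \emph{a priori} to be automorphic (typically arising from a CM abelian variety or a Hilbert modular form of suitable weight), and careful control of ramification and local-global compatibility at the bad primes. By contrast, the remaining ingredients — attaching the Galois representations, Brauer induction, solvable base change, and Godement-Jacquet — are classical, though each was a major theorem in its own day.
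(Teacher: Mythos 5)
The paper offers no proof of this theorem at all; it simply records it as a consequence of \cite{CHT}, \cite{HS-BT} and \cite{Taylor}. Your sketch is an accurate description of the strategy those references actually follow: attach the compatible system $\{\rho_{\omega,\ell}\}$, invoke potential automorphy of $\mathrm{Sym}^m\rho_{\omega,\ell}$ over a totally real Galois extension $F/\BQ$ (the heavy input being the Taylor--Wiles--Kisin modularity lifting theorem plus a Moret-Bailly argument to manufacture a congruent, known-automorphic compatible system), and then descend to $\BQ$ by Brauer induction and solvable base change, concluding via Godement--Jacquet. So you have filled in exactly the material the paper outsources.

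One small correction: Brauer's induction theorem expresses the trivial character as a $\BZ$-linear combination of characters induced from one-dimensional characters of \emph{elementary} subgroups (direct products of a cyclic group and a $p$-group), not from cyclic subgroups. This is harmless for your argument — elementary groups are nilpotent, hence solvable, which is all that Arthur--Clozel base change requires — but you should state the hypothesis as ``solvable'' (or ``nilpotent'') rather than ``cyclic.'' You might also note explicitly that the resulting virtual product of $L$-functions $\prod_i L(s,\Pi_{H_i}\otimes\chi_i)^{n_i}$ can have negative exponents $n_i$, so the conclusion is meromorphy (zeros as well as poles), which is precisely the strength of statement the theorem asserts and which the paper uses later.
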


Assume now that the automorphic represention $\omega$ is attached to a
holomorphic cusp form $g$ of weight $k$ for the congruence subgroup
$$\Gamma_0(N):=\left\{ \ \left( \begin {array}{cc}
                a & b \\
                c & d
        \end {array} \right) \in {\bf SL}(2,\BZ)\Mid c \equiv 0\ \hbox{\rm mod
                } \ N\ \right\}$$

$$\alpha_p=\frac{\pi_p}{p^{k-1/2}},\qquad \alpha_p=\frac{\bar\pi_p}{p^{k-1/2}}
$$

\begin{proposition}\label{Zmero}
Let $g$ be a holomorphic cusp form of weight 2 
for $\Gamma_0(N)$. Define for $r\in \BN$ and $\varepsilon\in{\pm 1}$:
\[
Z^{[\varepsilon]}_{r}(s)=\prod_{p \notin S(g)}Z^{[\varepsilon]}_{r,p}(s)
\]
where
\[
Z^{[\varepsilon]}_{r,p}(s)=1-\varepsilon\left( \pi_p ^{r}+\overline{\pi_p }^{r}\right) p^{-s}+p^{r-2s}.
\]
Then $Z^{[\varepsilon]}_{r}(s)$ is a meromorphic function on $\BC.$ 
\end{proposition}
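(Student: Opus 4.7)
The plan is to express $Z^{[\varepsilon]}_r(s)$ in terms of the symmetric power $L$-functions of the automorphic representation $\omega$ attached to $g$, and then invoke Theorem \ref{CS1}. The key algebraic input is that $\pi_p\bar\pi_p=p$, which lets each local factor split over $\BC$ as
\[
Z^{[\varepsilon]}_{r,p}(s)=(1-\varepsilon\pi_p^r p^{-s})(1-\varepsilon\bar\pi_p^r p^{-s}).
\]
From this one immediately reads off $Z^{[1]}_{r,p}(s)\,Z^{[-1]}_{r,p}(s)=Z^{[1]}_{2r,p}(2s)$, hence
\[
Z^{[-1]}_r(s)=Z^{[1]}_{2r}(2s)/Z^{[1]}_r(s),
\]
which reduces the whole proposition to the case $\varepsilon=+1$.

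For $\varepsilon=+1$ I would argue by induction on $r$, matching $Z^{[1]}_r$ against the symmetric power $L$-functions. In the standard normalization $\alpha_p=\pi_p/\sqrt{p}$, $\beta_p=\bar\pi_p/\sqrt{p}$ compatible with $\alpha_p\beta_p=1$, the Euler factor of the $r$-th symmetric power $L$-function is
\[
L(s,\omega_p,r)^{-1}=\prod_{i=0}^{r}\bigl(1-\pi_p^i\bar\pi_p^{r-i}p^{-s-r/2}\bigr).
\]
Pairing the $i$-th factor with the $(r-i)$-th factor for $i<r/2$ yields, after a short direct calculation using $\pi_p\bar\pi_p=p$,
\[
\bigl(1-\pi_p^i\bar\pi_p^{r-i}p^{-s-r/2}\bigr)\bigl(1-\pi_p^{r-i}\bar\pi_p^{i}p^{-s-r/2}\bigr)=Z^{[1]}_{r-2i,p}(s+r/2-i),
\]
while for $r=2n$ the unpaired middle factor equals $(1-p^{-s})$, a local factor of $\zeta(s)^{-1}$. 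Taking the product over $p\notin S(g)$ and folding the finitely many corrective Euler factors at primes of $S(g)$ into a rational function $C_r(s)$ on $\BC$, one obtains a relation of the shape
\[
L(s,\omega,r)^{-1}=C_r(s)\,\zeta(s)^{-\delta}\prod_{i=0}^{\lfloor(r-1)/2\rfloor}Z^{[1]}_{r-2i}(s+r/2-i),
\]
where $\delta=1$ if $r$ is even and $\delta=0$ otherwise.

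Solving this relation for the $i=0$ term expresses $Z^{[1]}_r(s+r/2)$ as a ratio of $L(s,\omega,r)^{-1}$, $\zeta(s)^{\delta}$, $C_r(s)^{-1}$, and the $Z^{[1]}_{r'}$ for $r'<r$ with $r'\equiv r\pmod{2}$. By Theorem \ref{CS1} each $L(s,\omega,r)$ is meromorphic on $\BC$, the Riemann zeta function is meromorphic on $\BC$, the correction $C_r(s)$ is a finite product of simple local factors and hence meromorphic on $\BC$, and by the inductive hypothesis the lower $Z^{[1]}_{r'}$ are meromorphic on $\BC$. The base cases are $Z^{[1]}_1(s)=L(s,g)^{-1}$ for odd $r$ and $Z^{[1]}_2(s)$ obtained directly from the relation for $r=2$ (even case). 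The only real substance of the argument is the combinatorial pairing identity; all of the analytic depth is concentrated in Theorem \ref{CS1}, and it is the meromorphic continuation of the symmetric power $L$-functions supplied there that is the crux of the proof.
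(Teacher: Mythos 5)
Your proposal is correct and follows essentially the same path as the paper: the paper's terse proof invokes the recursion relations (\ref{recpolysg}) and (\ref{recpolysug}), which are precisely the polynomial form of the pairing of the $i$-th and $(r-i)$-th Euler factors of $L(s,\omega_p,r)^{-1}$ that you carry out directly, with the unpaired middle factor giving $\zeta(s)^{-1}$ when $r$ is even; and your reduction of $\varepsilon=-1$ to $\varepsilon=+1$ via $Z^{[-1]}_{r,p}(s)=Z^{[1]}_{2r,p}(2s)/Z^{[1]}_{r,p}(s)$ is exactly the identity stated in the paper. You have simply made explicit the substitution $u\mapsto\pi_p$, $v\mapsto\bar\pi_p$, $X\mapsto p$, $Y\mapsto p^{-s-r/2}$ into $S^{[r]}$ and the ensuing appeal to Theorem~\ref{CS1}, which the paper leaves implicit.
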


For $\varepsilon =1$ follows from the recursive relations of the previous section.

For $\varepsilon =-1$ use the fact that 
\[ Z^{[-1]}_{r,p}(s)=Z^{[1]}_{2r,p}(2s)/Z^{[1]}_{r,p}(s)
\]

\section{Meromorphic continuation}\label{merocont}

Let $f(x,y)\in\BZ[x,y]$ 
denote a non-singular polynomial and let $S(f)$ be the set of primes for which
the reduction of $f$ in $\BZ/p\BZ$ becomes singular. 
As in the introduction we define the local Igusa integral at the prime $p$ by: 
\[
I_{f}(s,p)=\int_{\BZ_{p}^{2}}|f(x,y)|^{s}|dx||dy|.
\]

We have:

\begin{proposition}\label{comp1} Let $f(x,y)\in\BZ[x,y]$ 
be a non-singular polynomial and $p\notin S(f)$ then
\[
I_{f}(s,p)=(1-p^{-2}C_{p})\left( 1+\lambda _{p}\frac{p^{-(s+1)}}{%
(1-p^{-(s+1)})}\right)
\]
where
\[
\lambda _{p}=\frac{(p-1)C_{p}}{(p^{2}-C_{p})}.
\]
and 
$C_{p}=\mathrm{card}\left\{\, (x,y)\in\BZ /p\BZ 
\Mid f(x,y)=0 \ {\rm mod}\ p\, \right\} .$
\end{proposition}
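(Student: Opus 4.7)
The plan is to decompose $\BZ_p^2$ into its $p^2$ mod-$p$ residue classes and evaluate $I_f(s,p)$ class by class, separating those where $f$ is a $p$-adic unit from those where $f$ vanishes modulo $p$.

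First, for each $(a,b)\in(\BZ/p\BZ)^2$ with $f(a,b)\not\equiv 0\bmod p$, the function $|f|_p$ is identically $1$ on the disk $D_{a,b}:=(a,b)+p\BZ_p^2$, since $f$ is continuous and its value on $D_{a,b}$ is a unit in $\BZ_p$. Each such disk has measure $p^{-2}$, so the contribution of all $p^2-C_p$ such disks is
\[
(p^2-C_p)\,p^{-2}=1-p^{-2}C_p.
\]

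Next, for each $(a,b)$ with $f(a,b)\equiv 0\bmod p$, the assumption $p\notin S(f)$ says the reduction of $f$ is non-singular at $(a,b)\bmod p$, so at least one partial derivative is a $p$-adic unit on $D_{a,b}$; say $\partial_x f$. The map $\Phi(x,y):=(f(x,y),\,y)$ is then a measure-preserving analytic bijection from $D_{a,b}$ onto $p\BZ_p\times(b+p\BZ_p)$, its Jacobian $\partial_x f$ being a unit. Changing variables and using $|u|_p=|f|_p$ gives
\[
\int_{D_{a,b}}|f(x,y)|_p^{\,s}\,d\mu
=p^{-1}\int_{p\BZ_p}|u|_p^{\,s}\,du.
\]
A standard geometric sum computation, slicing $p\BZ_p$ by $|u|_p=p^{-n}$ for $n\ge 1$, yields
\[
\int_{p\BZ_p}|u|_p^{\,s}\,du=(1-p^{-1})\,\frac{p^{-(s+1)}}{1-p^{-(s+1)}}.
\]
Multiplying by $p^{-1}$ and summing over the $C_p$ singular residue classes gives the total contribution
\[
C_p\,(p^{-1}-p^{-2})\,\frac{p^{-(s+1)}}{1-p^{-(s+1)}}
=(p-1)C_p\,p^{-2}\cdot\frac{p^{-(s+1)}}{1-p^{-(s+1)}}.
\]

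Finally, adding the two contributions and noting the identity
\[
(1-p^{-2}C_p)\,\lambda_p
=\frac{p^2-C_p}{p^2}\cdot\frac{(p-1)C_p}{p^2-C_p}
=(p-1)C_p\,p^{-2},
\]
one reads off the claimed formula. The only non-routine step is the local change of variables at the singular residue classes, and the point where non-singularity of the reduction is used is exactly the unit Jacobian that makes $\Phi$ a measure-preserving bijection.
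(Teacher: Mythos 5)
Your proof is correct, and it takes a somewhat different route from the paper's. The paper partitions $\BZ_p^2$ by the $p$-adic valuation of $f$, i.e.\ by the sets $E_n\setminus E_{n+1}$ on which $|f|_p=p^{-n}$, computes $\mu(E_n)=C_{p^n}p^{-2n}$, invokes the Hensel-type counting identity $C_{p^n}=p^{n-1}C_p$ (which is where non-singularity enters), and then sums the resulting geometric series directly. You instead partition $\BZ_p^2$ into the $p^2$ residue discs mod $p$, handle the $p^2-C_p$ discs where $f$ is a unit trivially, and on each of the $C_p$ ``bad'' discs perform the analytic change of variables $\Phi(x,y)=(f(x,y),y)$, whose unit Jacobian $\partial_x f$ (again the non-singularity hypothesis) makes $\Phi$ a measure-preserving bijection onto $p\BZ_p\times(b+p\BZ_p)$; this reduces the integral over a bad disc to the elementary one-variable integral $\int_{p\BZ_p}|u|_p^s\,du$. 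The two proofs are of comparable length and use the same underlying Hensel/implicit-function input, but your version makes the geometry more visible (local straightening of the divisor $f=0$), whereas the paper's is a purely combinatorial count of solutions mod $p^n$; the latter generalizes a bit more readily to counting problems where no convenient change of variables is available, while your approach is the more standard idiom in Igusa-integral computations and scales naturally to resolutions of singularities when $f$ is singular. All the intermediate identities in your write-up check out, including $(1-p^{-2}C_p)\lambda_p=(p-1)C_p\,p^{-2}$.
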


\begin{proof} Let 
\[
E_0=\left\{ (x,y)\in\BZ_p^2\Mid f(x,y)\neq 0 \ {\rm mod}\ p\, \right\}
\]
\[
E_1=\left\{ (x,y)\in\BZ_p^2\Mid f(x,y)= 0 \ {\rm mod}\ p\, \right\}
\]
\[
E_n=\left\{ (x,y)\in\BZ_p^2\Mid f(x,y)= 0 \ {\rm mod}\ p^n\, \right\}
\]
Let 
\[
\tilde{E_n}=\left\{ (x,y)\in\left( \BZ_p/p^n\BZ_p\right)^2 \Mid f(x,y)= 0 \ {\rm mod}\ p^n\, \right\}
\]
and put $C_{p^n}=\mathrm{card}\tilde{E_n}$. Since $f(x,y)$ is non-singular $C_{p^n}=p^{n-1}C_p$.
\begin{eqnarray*}
I &=& \sum_{n=0}^\infty p^{-ns}\cdot\int_{E_n\setminus E_{n+1}} \\
&=& p^{-2}C_1+\sum_{n=1}^\infty p^{-ns}\cdot \left(C_{p^n}p^{-2n}-C_{p^{n+1}}p^{-2n-2}\right) \\
&=& p^{-2}C_1+\sum_{n=1}^\infty p^{-ns}\cdot C_pp^{(n-1)-2n}\left( 1-p^{-1}\right) \\
&=& \left( 1-p^{-2}C_p \right) + p^{-1}\left( 1-p^{-1}\right) \dfrac{p^{-(s+1)}}{(1-p^{-(s+1)}} \\
&=& (1-p^{-2}C_{p})\left( 1+\lambda _{p}\frac{p^{-(s+1)}}{%
(1-p^{-(s+1)})}\right)
\end{eqnarray*}
where
\[
\lambda _{p}=\frac{(p-1)C_{p}}{(p^{2}-C_{p})}.
\]

\end{proof}

\bigskip

The global Igusa zeta function of $f$ is then
\[
I_{f}(s)=\prod_{p\notin S(f)}I_{f}(s,p)(1-p^{-2}C_{p})^{-1}
\]
where the constant coefficient of the local factor $I_{f}(s,p)$ has been
normalized to be $1.$ Now let's set 
\begin{equation}
C_{p}=p-a_{p}.
\end{equation}
It is well known that 
\begin{equation}\label{absch}
|a_{p}|\leq 2g(f)p^{1/2}.
\end{equation}
for every $p\notin S(f)$ with $g(f)$ being the genus of the complex curve
defined by $f$.

\begin{proposition} Let $f(x,y)\in\BZ[x,y]$ 
be a non-singular polynomial then
the abscissa of convergence of $I_{f}(s)$ is $0.$
\end{proposition}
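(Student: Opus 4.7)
The plan is to rewrite the local Euler factor in a way that isolates a Riemann zeta contribution, and then use the Hasse bound to show that the remaining factor is absolutely convergent strictly past $\Re(s)=0$. Combining these we conclude that the Euler product converges exactly on $\Re(s)>0$, with divergence at $s=0$ coming from the pole of $\zeta(s+1)$.

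First, by Proposition \ref{comp1} the normalised local factor is
\[
F_p(s):=1+\lambda_p\frac{p^{-(s+1)}}{1-p^{-(s+1)}}=\frac{1+(\lambda_p-1)p^{-(s+1)}}{1-p^{-(s+1)}}.
\]
A short computation using $C_p=p-a_p$ gives
\[
\lambda_p-1=\frac{(p-1)(p-a_p)-(p^2-p+a_p)}{p^2-p+a_p}=\frac{-a_p\,p}{p^2-p+a_p},
\]
so by the bound $|a_p|\le 2g(f)p^{1/2}$ from (\ref{absch}) we obtain $|\lambda_p-1|=O(p^{-1/2})$ as $p\to\infty$. Consequently
\[
I_f(s)=\Bigl(\prod_{p\notin S(f)}(1-p^{-(s+1)})^{-1}\Bigr)\cdot\Bigl(\prod_{p\notin S(f)}\bigl(1+(\lambda_p-1)p^{-(s+1)}\bigr)\Bigr).
\]
The first factor differs from the Riemann zeta function $\zeta(s+1)$ only by finitely many Euler factors, so it converges absolutely exactly for $\Re(s)>0$ and blows up as $s\to 0^+$. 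For the second factor, the estimate $|\lambda_p-1|=O(p^{-1/2})$ makes the $p$-th term of the logarithmic series $O(p^{-\Re(s)-3/2})$, so this product converges absolutely for $\Re(s)>-1/2$ (and in particular has no zero at $s=0$, since the Hasse bound keeps each local term bounded away from zero on a neighbourhood of the real axis).

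From this factorisation, convergence for $\Re(s)>0$ is immediate. To show $0$ is the abscissa, take $s\in\BR$, $s\searrow 0$: the second product stays bounded and non-zero while the first is essentially $\zeta(s+1)$, which diverges to $+\infty$, so $I_f(s)\to\infty$ and the Dirichlet series cannot converge at any real $s\le 0$. Hence the abscissa of convergence is exactly $0$.

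The only nontrivial analytic input is the Hasse bound (\ref{absch}); everything else is standard manipulation of Euler products and a comparison with $\zeta(s+1)$. The main technical point to be careful about is ensuring that the "tail" product $\prod_p(1+(\lambda_p-1)p^{-(s+1)})$ is controlled uniformly in a right half-plane containing a punctured neighbourhood of $s=0$, which follows cleanly from $|\lambda_p-1|=O(p^{-1/2})$ and absolute convergence of $\sum_p p^{-3/2}$.
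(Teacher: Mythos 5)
Your proof is correct, and it is very close in spirit to the paper's, but you organise the computation multiplicatively where the paper works additively. The paper's own proof writes the normalised local factor as $1+r_p$ with
\[
r_p=\frac{p^{-(s+1)}}{1-p^{-(s+1)}}-\frac{pa_p}{p^2-p+a_p}\cdot\frac{p^{-(s+1)}}{1-p^{-(s+1)}},
\]
observes via the Hasse bound that the second term is $O(p^{-1/2})$ times the first, and concludes directly that $\sum_p|r_p|$ has the same abscissa as $\sum_p|p^{-(s+1)}|$, namely $\Re(s)>0$. You instead clear the denominator, write the local factor as $\bigl(1+(\lambda_p-1)p^{-(s+1)}\bigr)/(1-p^{-(s+1)})$, and factor out $\zeta(s+1)$ from the Euler product before estimating the remainder; this is exactly the decomposition the paper uses later, in Proposition~\ref{FirstCont}, to obtain the continuation to $\Re(s)>-1/2$. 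So you have effectively proved the abscissa statement by invoking the first-continuation factorisation. That buys you a more transparent divergence argument at $s=0$ (a literal pole of $\zeta(s+1)$ times a bounded nonzero factor), at the cost of doing slightly more work than the proposition itself requires. One small thing to keep explicit: the passage from ``the Euler product diverges as $s\searrow 0$'' to ``the Dirichlet series does not converge at any $s\le 0$'' uses that $I_f(s)$ has nonnegative Dirichlet coefficients (so Landau's theorem applies, and the abscissas of convergence and absolute convergence coincide); this is true here since $\lambda_p>0$, but it is worth stating, as the paper is equally terse on this point.
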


\begin{proof} Recall that
\[
\prod_{p}(1+r_{p})
\]
is absolutely convergent if and only $\sum_{p}\left| r_{p}\right| $
converges. Also $\sum_{p}\left| p^{-s}\right| $ converges if and only if $%
\Re (s)>1.$

We have
\[
 1+\lambda _{p}\frac{p^{-(s+1)}}{(1-p^{-(s+1)})} =
 1+
\frac{p^{-(s+1)}}{(1-p^{-(s+1)})}-\frac{pa_{p}}{(p^{2}-p+a_{p})}\frac{
p^{-(s+1)}}{(1-p^{-(s+1)})} . 
\]
Using the fact that $\left| a_{p}\right| \leq 2 p^{1/2}$ we deduce that the
abscissa of convergence is the same as
\[
\sum_{p}\left| \frac{p^{-(s+1)}}{(1-p^{-(s+1)})}\right| =\sum_{p}\left|
p^{-(s+1)}+\frac{p^{-2(s+1)}}{(1-p^{-(s+1)})}\right| .
\]
Hence $I_{f}(s)$ converges absolutely if and only if $\Re (s)>0.$
\end{proof}

A first meromorphic continuation in general:

\begin{proposition}\label{FirstCont} Let $f(x,y)\in\BZ[x,y]$ 
be a non-singular polynomial then
$I_{f}(s)$ has a meromorphic continuation to the region $\Re(s) >-1/2$.
\end{proposition}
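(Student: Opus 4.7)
The plan is to factor out a copy of $\zeta(s+1)$ from the Euler product $I_f(s)$, leaving a residual Euler product that, thanks to the Hasse--Weil bound (\ref{absch}), converges absolutely on a strictly larger half-plane.

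First I would rewrite each normalized local factor from Proposition~\ref{comp1} over a common denominator. Combining gives
$$I_f(s,p)(1-p^{-2}C_p)^{-1} = \frac{1 - (1-\lambda_p)p^{-(s+1)}}{1 - p^{-(s+1)}},$$
and a short direct computation using $\lambda_p = (p-1)(p-a_p)/(p^2-p+a_p)$ simplifies the coefficient in the numerator to $1 - \lambda_p = a_p p/(p^2 - p + a_p) =: F_p$. Thus
$$I_f(s) = \prod_{p \notin S(f)} \frac{1 - F_p\, p^{-(s+1)}}{1 - p^{-(s+1)}}.$$

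Next I would recognize the denominators as the tail of an Euler product for the Riemann zeta function. Using $\zeta(s+1) = \prod_p (1-p^{-(s+1)})^{-1}$ and restoring the finitely many primes in $S(f)$, I obtain
$$I_f(s) = \zeta(s+1) \cdot \prod_{p \in S(f)}(1 - p^{-(s+1)}) \cdot \prod_{p \notin S(f)} \left( 1 - F_p\, p^{-(s+1)} \right).$$
The first factor is meromorphic on all of $\BC$ with a single simple pole at $s=0$, and the middle factor is a finite product of entire functions, so neither contributes any obstacle to meromorphic continuation.

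It remains to show the tail Euler product converges absolutely on $\Re(s) > -1/2$. The bound $|a_p| \leq 2 g(f)\, p^{1/2}$ together with $p^2 - p + a_p \sim p^2$ for large $p$ gives $|F_p| \leq C\, p^{-1/2}$ for a constant $C$ depending only on $f$. Hence $|F_p\, p^{-(s+1)}| \leq C\, p^{-3/2-\Re(s)}$, and for $\Re(s) > -1/2$ the series $\sum_p p^{-3/2 - \Re(s)}$ converges, giving absolute convergence of $\prod_{p \notin S(f)} (1 - F_p\, p^{-(s+1)})$ and a holomorphic function on that half-plane. Combining the three factors yields the claimed meromorphic continuation. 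There is no serious obstacle: the entire argument amounts to identifying $\zeta(s+1)$ as the sole source of the barrier at $\Re(s)=0$ and then using Hasse--Weil to gain the extra half-plane. The deeper continuation to $\Re(s)>-3/2$ demanded by Theorem~\ref{merotheo}(ii) is where the symmetric-power $L$-function machinery of Section~\ref{power} will be needed; this proposition is the soft first step.
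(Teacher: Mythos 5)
Your proof is correct and follows essentially the same route as the paper's: rewrite each normalized local factor as $\bigl(1 - (1-\lambda_p)p^{-(s+1)}\bigr)/(1 - p^{-(s+1)})$, identify $1-\lambda_p = a_p p/(p^2 - p + a_p)$, factor out $\zeta(s+1)$, and use the Weil bound $|a_p| \leq 2g(f)p^{1/2}$ to get absolute convergence of the residual Euler product on $\Re(s) > -1/2$. You are a bit more explicit than the paper about restoring the finitely many $p \in S(f)$ and about the convergence estimate, but the decomposition and the key estimate are the same.
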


\begin{proof} Firstly
\[
(1-p^{-(s+1)})\left( 1+\lambda _{p}\frac{p^{-(s+1)}}{(1-p^{-(s+1)})}\right)
=1-p^{-(s+1)}+\lambda _{p}p^{-(s+1)}
\]
and
\begin{eqnarray*}
-p^{-(s+1)}+\lambda _{p}p^{-(s+1)} &=&\frac{(p-1)(p-a_{p})p^{-(s+1)}-\left(
p^{2}-p+a_{p}\right) p^{-(s+1)}}{(p^{2}-C_{p})} \\
&=&\frac{-a_{p}p^{-s}}{(p^{2}-C_{p})}.
\end{eqnarray*}

Thus writing 
\[
I_{E}(s)=\zeta (s+1)\prod_{p}\left( 1-\frac{a_{p}p^{-s}}{(p^{2}-C_{p})}%
\right)
\]
continues $I_{f}(s)$ up to $\Re (s)>-1/2.$
\end{proof}

\bigskip
The results of this section up to this point are considered by Ono in his paper \cite{Ono}.

From now on we specialize to the case when our non-singular polynomial
$f(x,y)\in\BZ[x,y]$ defines an elliptic curve $E$, that is $g(f)=1$. We first
introduce some Dirichlet series related to the global Igusa zeta function
$I_f(s)$. First of all we recall the definition of the Hasse-Weil zeta function
of $E$.  
Let $\tilde E$ be the projectivized elliptic curve $E$ and
$N_{p^{m}}$ the number of points in $\tilde E(\BF_{p^{m}})$. Put
\[
\zeta (E,p,s)=\exp \left( \sum_{m=1}^{\infty }N_{p^{m}}\frac{p^{-ms}}{m}%
\right)
\]
Note
that this is the projective version of the curve in which case 
$N_{p}=C_{p}+1 $ 
since there is one extra point at infinity not picked up by $C_{p}.$ 
We then have the following explicit formula for this zeta function:
\[
\zeta (E,p,s)=\frac{1-a_{p}p^{-s}+p^{1-2s}}{(1-p^{-s})(1-p^{1-s})}.
\]

The Hasse-Weil zeta function is defined by 

\begin{equation}\label{HW}
\zeta (E,s)=\prod_{p\notin S(f)} (1-a_{p}p^{-s}+p^{1-2s})^{-1}.
\end{equation}

It is known that the Euler product of these zeta functions is meromorphic.

We recall some facts about $a_{p}:$

\begin{proposition}
\begin{itemize}
\item[(i)] $|a_p|\leq 2 p^{1/2}$

\item[(ii)] There exist algebraic integers 
$\pi_p $ and $\overline{\pi_p }$ in some
finite extension of $\BQ$ with $\pi_p \overline{\pi_p }=p$ 
and $a_{p}=\pi_p +\overline{\pi_p }$ 
and $\left| \pi_p \right| =\left| \overline{\pi_p }\right|
=p^{1/2}.$
\end{itemize}

\end{proposition}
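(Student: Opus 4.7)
The plan is to deduce both statements from the rationality of the local Hasse--Weil zeta function $\zeta(E,p,s)$ displayed just before the proposition, together with the Riemann hypothesis for elliptic curves over finite fields (Hasse's theorem). Both pieces are classical and may be quoted.

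First I would observe that the numerator of the explicit formula
\[
\zeta(E,p,s)=\frac{1-a_{p}p^{-s}+p^{1-2s}}{(1-p^{-s})(1-p^{1-s})}
\]
is the polynomial $1-a_{p}T+pT^{2}\in\BZ[T]$ evaluated at $T=p^{-s}$. Since this polynomial is monic of degree two after substituting $U=pT$ (indeed $pT^{2}-a_{p}T+1$ has roots $\pi_{p}^{-1},\overline{\pi_{p}}^{-1}$ whose reciprocals are integral over $\BZ$), its two roots $\pi_{p},\overline{\pi_{p}}$ in $\overline{\BQ}$ are algebraic integers, and by comparing coefficients we get $\pi_{p}\overline{\pi_{p}}=p$ and $\pi_{p}+\overline{\pi_{p}}=a_{p}$. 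This already proves the algebraic portion of part (ii).

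The second step, which is the main content, is to show $|\pi_{p}|=|\overline{\pi_{p}}|=p^{1/2}$. This is precisely Hasse's theorem (the Riemann hypothesis for an elliptic curve over $\BF_{p}$), which I would invoke as a standard result: counting $\tilde E(\BF_{p^{m}})$ and equating with the logarithmic derivative of $\zeta(E,p,s)$ gives
\[
N_{p^{m}}=p^{m}+1-\pi_{p}^{m}-\overline{\pi_{p}}^{m},
\]
and Hasse's bound $|N_{p^{m}}-p^{m}-1|\le 2p^{m/2}$ forces both complex conjugate eigenvalues of Frobenius to have absolute value $p^{1/2}$. In particular $\pi_{p}$ and $\overline{\pi_{p}}$ are complex conjugates in the usual sense (for a suitable embedding of their number field into $\BC$), justifying the overline notation already in use. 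Part (i) follows immediately from the triangle inequality $|a_{p}|=|\pi_{p}+\overline{\pi_{p}}|\le 2p^{1/2}$.

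The main obstacle is of course Hasse's theorem itself, but since the proposition is standard and merely records Weil's theorem in the special case of an elliptic curve, this should be cited rather than reproved. No further computation is needed beyond this citation plus the elementary factorisation argument above.
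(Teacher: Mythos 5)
The paper does not actually supply a proof of this proposition: it is introduced with ``We recall some facts about $a_p$'' and left as a citation to the classical Hasse--Weil theory, so there is no argument in the text for you to diverge from. Your sketch is the standard derivation and it is correct: $\pi_p$ and $\overline{\pi_p}$ are the roots of the reciprocal polynomial $T^2 - a_pT + p$, which is monic in $\BZ[T]$, so they are algebraic integers; Vieta gives $\pi_p\overline{\pi_p}=p$ and $\pi_p+\overline{\pi_p}=a_p$; and $|\pi_p|=|\overline{\pi_p}|=p^{1/2}$ is exactly Hasse's theorem, from which (i) follows by the triangle inequality. One small imprecision: you frame the integrality step via the substitution $U=pT$, but $1-a_pT+pT^2$ does not become monic under that substitution without also clearing denominators; the cleaner statement, which you in fact give parenthetically, is that $\pi_p,\overline{\pi_p}$ are roots of the reciprocal polynomial $T^2-a_pT+p$. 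With that phrasing tidied, the proof is exactly what the paper is implicitly invoking.
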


Suppose now that $E$ has complex multiplication.
This means that its ring of endomorphisms ${\rm End}_\BC(E)$ is not equal to
$\BZ$ only. Consequently the tensor product 
$K:=\BQ\otimes_\BZ {\rm End}_\BC(E)$ is an imaginary quadratic number field.
Furthermore there is a choice of the $\pi_p$ and an idele class character
\begin{equation}
\chi_E: K^*\backslash \BA_k^*\to \BC^* 
\end {equation}
such that 
\begin{equation}
\zeta (E,p,s)={1-a_{p}p^{-s}+p^{1-2s}}=
\prod_{\wp | p} (1-\chi([\wp])N(\wp)^{-s})
\end{equation}

If $E$ does not have complex multiplication then 
by Wiles theorem there is a holomorphic modular form $g$ of weight $2$ such that $\zeta (E,s)=L(s,g,1)$.

We want to show next that we can continue $I_{E}(s)$ meromorphically to $\Re
(s)>-3/2.$ We shall use the following functions to do the continuation which
we have proved are meromorphic in section \ref{section: meromorphic
functions}:

\begin{proposition}
\label{Z_n is mero}Let 
\[
Z_{r}(s)=\prod_{p\notin S(f)}Z_{n,p}(s)
\]
where
\[
Z_{r,p}(s)=1-\left( \pi_p ^{r}+\overline{\pi_p }^{r}\right) p^{-s}+p^{r-2s}.
\]
Then $Z_{r}(s)$ is a meromorphic function on $\BC.$ 
\end{proposition}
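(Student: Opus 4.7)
The key observation is that, under the substitution $(u, v, X, Y) \mapsto (\pi_p, \overline{\pi_p}, p, p^{-s})$, the polynomial $P^{[1]}_{(r,n,1)}$ from (\ref{poly}) evaluates to $Z_{r,p}(s - n)$, while $S^{[m]}$ from (\ref{polys}) evaluates to the reciprocal of the local factor at $p$ of the $m$-th symmetric power L-function, shifted to argument $s - m/2$: writing $\pi_p = \alpha_p \sqrt{p}$ and $\overline{\pi_p} = \beta_p \sqrt{p}$ with $|\alpha_p|=|\beta_p|=1$ and $\alpha_p\beta_p=1$ converts $1 - \pi_p^i \overline{\pi_p}^{m-i} p^{-s}$ into $1 - \alpha_p^i \beta_p^{m-i} p^{-(s - m/2)}$.

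Taking the Euler product over $p \notin S(f)$ in the recursive identities (\ref{recpolysg}) and (\ref{recpolysug}) and applying this substitution yields
\begin{equation*}
L(s - n, \omega, 2n)^{-1} \;=\; \zeta(s - n)^{-1} \prod_{i=1}^n Z_{2i}(s - (n - i))
\end{equation*}
and
\begin{equation*}
L(s - n + 1/2, \omega, 2n-1)^{-1} \;=\; \prod_{i=1}^n Z_{2i-1}(s - (n - i)),
\end{equation*}
where $\omega$ is the automorphic representation attached to $E$ via the Modularity Theorem in the non-CM case. The plan is to solve these relations inductively for $Z_r(s)$, separately for even and odd $r$, with base cases $Z_1(s) = L(s - 1/2, \omega, 1)^{-1}$ (the reciprocal of the Hasse-Weil L-function of $E$) and $Z_2(s) = \zeta(s-1) / L(s-1, \omega, 2)$. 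The inductive step expresses $Z_r(s)$ as a ratio of previously known $Z_{r'}$ (for $r' < r$ of the same parity) together with shifts of $\zeta$ and of the symmetric power L-function $L(\cdot, \omega, r)$, all meromorphic on $\BC$ by the inductive hypothesis and Theorem \ref{CS1}.

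In the CM case one argues analogously, but the symmetric power L-functions are replaced by products of Hecke L-functions $L(\chi_E^j, \cdot)$ attached to the grossencharacter $\chi_E$ of the imaginary quadratic field $K = \BQ \otimes \mathrm{End}_\BC(E)$; these are meromorphic by Proposition \ref{grossprop}. The only deep ingredient is therefore Theorem \ref{CS1} (combined with the Modularity Theorem) in the non-CM case, furnishing meromorphic continuation of the symmetric power L-functions of the cusp form associated to $E$; the polynomial manipulation via (\ref{recpolysg}) and (\ref{recpolysug}) is then routine.
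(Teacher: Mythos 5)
Your proof is correct and takes essentially the same approach as the paper: the paper's own argument (given at Proposition~\ref{Zmero}, of which \ref{Z_n is mero} is the $\varepsilon=1$ case) is just the terse remark that it ``follows from the recursive relations of the previous section,'' i.e.\ from (\ref{recpolysg}) and (\ref{recpolysug}). You have correctly spelled out the induction that the paper leaves implicit, including the right dictionary $S^{[m]}(\pi_p,\overline{\pi_p},p,p^{-s})=L(s-m/2,\omega_p,m)^{-1}$, the even/odd base cases, and the split into CM (Hecke grossencharacter $L$-functions, Proposition~\ref{grossprop}) versus non-CM (symmetric power $L$-functions via modularity, Theorem~\ref{CS1}).
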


In fact as we shall see it is enough
actually to have that they are meromorphic on $\Re (s)>1/2$.

Using this proposition we can now prove:

\begin{theorem}
\label{Meromorphic cont}The global Igusa zeta function of an elliptic curve $%
I_{E}(s)$ has meromorphic continuation to $\Re (s)>-3/2.$
\end{theorem}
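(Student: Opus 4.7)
The plan starts from the factorisation established in Proposition~\ref{FirstCont},
\[
I_E(s)=\zeta(s+1)\prod_{p\notin S(E)} g_p(s),\qquad g_p(s):=1-\frac{a_p p^{-s}}{p^2-C_p},
\]
which already gives meromorphic continuation up to $\Re(s)>-1/2$. My task is therefore to continue the right-hand Euler product to $\Re(s)>-3/2$. The natural idea is to strip off the leading term $\tilde g_p(s):=1-a_p p^{-s-2}$ and treat the ratio as a correction. A direct calculation gives
\[
g_p(s)-\tilde g_p(s)=-\frac{a_p C_p\,p^{-s-2}}{p^2-C_p},
\]
and using $|a_p|\le 2p^{1/2}$ together with $C_p=O(p)$ and $p^2-C_p\sim p^2$ one sees that $g_p(s)/\tilde g_p(s)=1+O(p^{-\Re(s)-5/2})$. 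Hence $\prod_p g_p/\tilde g_p$ converges absolutely, and defines a holomorphic function, on $\Re(s)>-3/2$, so everything is reduced to continuing $\prod_p \tilde g_p(s)$ to this region.

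For this I would apply Lemma~\ref{deco1} with $u=\pi_p$, $v=\bar\pi_p$, $X=p$, $Y=p^{-s-2}$; then $1-uY-vY$ is exactly $\tilde g_p(s)$, and for each $M\in\BN$ the lemma yields
\[
\tilde g_p(s)=Q_M(\pi_p,\bar\pi_p,p,p^{-s-2})+W_M(\pi_p,\bar\pi_p,p,p^{-s-2}).
\]
Factoring out $Q_M$, I would write
\[
\prod_p\tilde g_p(s)=\Bigl(\prod_p Q_M(\pi_p,\bar\pi_p,p,p^{-s-2})\Bigr)\cdot\prod_p\Bigl(1+\frac{W_M(\pi_p,\bar\pi_p,p,p^{-s-2})}{Q_M(\pi_p,\bar\pi_p,p,p^{-s-2})}\Bigr).
\]
For the first factor, each local block $P^{[\varepsilon(w)]}_{w}$ making up $Q_M$ becomes, under $Y=p^{-s-2}$, either the local Euler factor $Z^{[\varepsilon]}_{r,p}$ at the shifted argument $m(s+2)-n$ (when $w=(r,n,m)\in{\cal L}_M$ has $r\ne 0$) or a Riemann zeta local factor at a similar shift (when $r=0$). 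Hence $\prod_p Q_M(\pi_p,\bar\pi_p,p,p^{-s-2})$ is a finite product of the meromorphic functions of Proposition~\ref{Zmero} and of shifts of $\zeta(s)$, and in particular meromorphic on all of $\BC$.

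For the second factor, Lemma~\ref{eulel1}, with its parameter $s$ replaced by $s+2$ to accommodate the new choice $Y=p^{-s-2}$, gives absolute convergence on $\Re(s)>-3/2+1/M$. Therefore $\prod_p\tilde g_p(s)$ is meromorphic on $\Re(s)>-3/2+1/M$ for each $M$, and by uniqueness of analytic continuation these representations agree on their overlaps, assembling into a meromorphic continuation on the union $\Re(s)>-3/2$; combined with the holomorphic correction $\prod_p g_p/\tilde g_p$ and with $\zeta(s+1)$, this gives the desired continuation of $I_E(s)$. The main obstacle is essentially bookkeeping: one must carefully match every $P^{[\varepsilon(w)]}_{w}$ at $Y=p^{-s-2}$ with the appropriate factor from Section~\ref{section: meromorphic functions}, and confirm that the abscissa $\Re(s)>1/2+1/M$ of Lemma~\ref{eulel1} translates correctly to $\Re(s)>-3/2+1/M$ once $Y$ is replaced by $p^{-s-2}$.
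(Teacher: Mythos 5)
Your proof is correct and takes essentially the same route as the paper. Both arguments peel off $\zeta(s+1)$ via Proposition~\ref{FirstCont}, replace each local factor $g_p$ by the leading term $1-a_p p^{-(s+2)}$ at the cost of an absolutely convergent correction on $\Re(s)>-3/2$, apply Lemma~\ref{deco1} to factor out $Q_M$, identify the blocks $P^{[\varepsilon(w)]}_w$ with shifts of the functions $Z^{[\varepsilon]}_{r}$ of Proposition~\ref{Zmero} (resp.\ shifts of $\zeta$ when $r=0$), invoke Lemma~\ref{eulel1} for the remainder, and let $M\to\infty$; the only cosmetic differences are that you substitute $Y=p^{-s-2}$ directly (the paper first changes variables and continues $\prod_p(1-a_p p^{-s})$ to $\Re(s)>1/2$), and you make the correction factor multiplicative, $\prod_p g_p/\tilde g_p$, which is in fact cleaner than the paper's additive phrasing of the same estimate.
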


\begin{proof}
By Proposition \ref{FirstCont} we need to show how to meromorphically continue:

\[
\prod_{p\notin S(f)}\left( 1-\frac{a_{p}p^{-s}}{(p^{2}-C_{p})}%
\right)
\]

We write for $p\notin S(f)$
\[
\left( 1-\frac{a_{p}p^{-s}}{(p^{2}-C_{p})}\right) =\left( 1-a_{p}p^{-2-s}-%
\frac{a_{p}p^{-2-s}\left( p^{-1}-a_{p}p^{-2}\right) }{\left(
1-p^{-1}+a_{p}p^{-2}\right) }\right) .
\]
Since
\[
\sum_{p}\frac{a_{p}p^{-2-s}\left( p^{-1}-a_{p}p^{-2}\right) }{\left(
1-p^{-1}+a_{p}p^{-2}\right) }
\]
converges absolutely on $\Re (s)>-3/2$ it will suffice to prove that

\[
W(s)=\prod_{p\notin S(f)}\left( 1-a_{p}p^{-s}\right)
\]
can be meromorphically continued to $\Re (s)>1/2.$ 

By Lemma \ref{deco1} we can write for $M\in \BN$:
\begin{eqnarray*}
\prod_{p\notin S(f)}\left( 1-a_{p}p^{-s}\right)
&=& \prod_{p\notin S(f)}\left( 1-(\pi_p +\overline{\pi_p })p^{-s}\right)\\
&=& \prod_{p\notin S(f)}\left( Q_M(\pi_p , \overline{\pi_p }, p, p^{-s}) + W_M(\pi_p , \overline{\pi_p }, p, p^{-s})\right)\\
&=& \left( \prod_{p\notin S(f)} Q_M(\pi_p , \overline{\pi_p }, p, p^{-s}) \right) \prod_{p\notin S(f)}\left( 1+\frac{W_M(\pi_p,\overline{\pi_p},p,p^{-s})}
{Q_M(\pi_p,\overline{\pi_p},p,p^{-s})} \right) \\
\end{eqnarray*}

Recall $Q_M$ was defined as a product of the polynomials 
\[
P^{[\varepsilon]}_{(r,n,m)}
=1-\varepsilon(u^r+v^r)X^nY^m+X^{r+2n}Y^{2m}
\]
Now
\[
P^{[\varepsilon]}_{(r,n,m)}(\pi_p,\overline{\pi_p },p,p^{-s})=
1-\left( \pi_p ^{r}+\overline{\pi_p }^{r}\right) p^{-ms+n}+p^{r-2s}=Z_{r,p}(ms-n)
\]

By Proposition \ref{Z_n is mero} this implies that 
\[
\left( \prod_{p\notin S(f)} Q_M(\pi_p , \overline{\pi_p }, p, p^{-s})\right)
\]
is a meromorphic function on $\BC$.

By Lemma \ref{eulel1} 
\[
\prod_{p\notin S(f)}\left( 1+\frac{W_M(\pi_p,\overline{\pi_p},p,p^{-s})}
{Q_M(\pi_p,\overline{\pi_p},p,p^{-s})}\right)
\]
converges absolutely for $\Re(s)>1/2+1/M$.

Therefore by taking $M$ to infinity we can meromorphically continue $W(s)$ to $\Re (s)>1/2.$ This in turn implies that the global Igusa zeta function of an elliptic curve $%
I_{E}(s)$ has meromorphic continuation to $\Re (s)>-3/2.$

\end{proof}

\bigskip

\section{Natural boundaries}

In this section we let $f(x,y)\in \BZ[x,y]$ be a nonsingular 
integer polynomial defining an elliptic curve
$E$. Recall that we did our meromorphic continuation using the function $$\prod_{p\notin S(f)} Q_M(\pi_p,\overline{\pi_p },p,p^{-s}).$$ This was built from a product and quotient of symmetric power L-functions which by \cite{CHT}, \cite{HS-BT} and \cite{Taylor} are meromorphic on $\BC$. Therefore the possible poles of our meromorphic function doing the continuation depends on the location of the poles and zeros of the symmetric power L-functions. Working under the assumption of the Generalized Riemann Hypothesis we can assume that zeros are located on $\Re(s)=1/2$. It will be sufficient to know that any poles of the symmetric power L-functions lie on the real axis.

Interest in meromorphically continuing the symmetric power $L$-functions arose because of Serre's observation that this would imply the Sato-Tate conjecture for the distribution of the values of $a_p$ in the case of $E$ not having complex multiplication. The distribution in the case of CM curves was known from class field theory.

In particular the results of  \cite{CHT}, \cite{HS-BT} and \cite{Taylor} imply the truth of the following:

\begin{theorem}
Suppose $E$ has no complex multiplication. Then $a_pp^{-1/2}/2$ is equidistributed in $[-1,1]$ with respect to the probability measure $\dfrac{2}{\pi}\sqrt{1-t^2}. $
\end{theorem}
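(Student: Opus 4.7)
The plan is to deduce Sato-Tate equidistribution from the meromorphic properties of the symmetric power $L$-functions via Weyl's criterion, following Serre's original reduction. For each prime $p\notin S(E)$ write $\pi_p = p^{1/2}e^{i\theta_p}$ with $\theta_p\in[0,\pi]$, so that $a_p p^{-1/2}/2 = \cos\theta_p$. The push-forward of the Sato-Tate measure $\frac{2}{\pi}\sin^2\theta\,d\theta$ on $[0,\pi]$ under $t=\cos\theta$ is exactly $\frac{2}{\pi}\sqrt{1-t^2}\,dt$, so it is enough to prove that $\{\theta_p\}$ is equidistributed on $[0,\pi]$ with respect to $\frac{2}{\pi}\sin^2\theta\,d\theta$.

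An orthogonal basis for this measure is provided by the Chebyshev polynomials of the second kind $U_m(\cos\theta) = \sin((m+1)\theta)/\sin\theta$, so by Weyl's criterion it suffices to show that
$$\sum_{\substack{p\leq x \\ p\notin S(E)}} U_m(\cos\theta_p) = o(\pi(x))$$
for every $m\geq 1$. The bridge to Section \ref{power} is the elementary identity
$$\sum_{i=0}^{m}\alpha_p^i\beta_p^{m-i} = \sum_{i=0}^{m} e^{i(m-2i)\theta_p} = U_m(\cos\theta_p),$$
where $\alpha_p = e^{i\theta_p}$, $\beta_p = e^{-i\theta_p}$ are the Satake parameters (\ref{cpm}) of the cuspidal automorphic representation $\omega$ of ${\bf GL}(2,\BA)$ attached to $E$ by Wiles' theorem. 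Taking the logarithm of the Euler product (\ref{symmig}) shows that the Dirichlet coefficient of $p^{-s}$ in $\log L(s,\omega,m)$ is precisely $U_m(\cos\theta_p)$, while the higher prime-power terms form a Dirichlet series that is absolutely convergent on $\Re(s) > 1/2$.

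I would then apply a Wiener-Ikehara-type Tauberian theorem to $\log L(s,\omega,m)$ for each fixed $m\geq 1$. Theorem \ref{CS1} supplies meromorphic continuation of $L(s,\omega,m)$ to all of $\BC$, but what is actually needed here is holomorphy and non-vanishing on the closed half-plane $\Re(s)\geq 1$ (equivalently, the absence of poles and zeros on the edge line $\Re(s)=1$). For $m\geq 1$ this follows from the cuspidal automorphy of the $m$-th symmetric power lift established in \cite{CHT}, \cite{HS-BT}, \cite{Taylor}, combined with the classical Jacquet-Shalika non-vanishing theorem for standard $L$-functions of cuspidal representations. The Tauberian step then yields $\sum_{p\leq x} U_m(\cos\theta_p)\log p = o(x)$, and partial summation produces the required Weyl estimate.

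The main obstacle is therefore not the equidistribution machinery itself, which is entirely routine once the Chebyshev identity is in place, but rather the edge-line regularity of every symmetric power $L$-function $L(s,\omega,m)$. This is precisely the substantive automorphy input of the three cited papers; without it the Tauberian argument could not even be started, whereas with it the remaining steps reduce to a finite-dimensional Fourier analysis on $[0,\pi]$.
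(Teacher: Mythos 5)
The paper does not actually prove this theorem: it simply states that the results of \cite{CHT}, \cite{HS-BT}, \cite{Taylor} imply it, with no argument given. What you have written out is Serre's classical reduction of Sato--Tate to the analytic behaviour of symmetric power $L$-functions on the line $\Re(s)=1$, and the overall structure is correct. The change of variables $t=\cos\theta_p$, the Chebyshev identity $\sum_{i=0}^m \alpha_p^i\beta_p^{m-i}=U_m(\cos\theta_p)$, the passage to $\log L(s,\omega,m)$, and the Tauberian/Weyl step are all exactly the right ingredients, and you correctly identify that Theorem~\ref{CS1} (bare meromorphic continuation) is not in itself sufficient: what the Tauberian argument needs is holomorphy and non-vanishing on $\Re(s)\geq 1$.

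One technical point deserves flagging. You write that the needed edge-line regularity ``follows from the cuspidal automorphy of the $m$-th symmetric power lift established in \cite{CHT}, \cite{HS-BT}, \cite{Taylor}.'' Those papers do not establish automorphy of $\mathrm{Sym}^m$ over $\BQ$; they establish \emph{potential} automorphy (automorphy after base change to a suitable totally real extension, which moreover varies with $m$). Extracting holomorphy and non-vanishing of $L(s,\omega,m)$ on $\Re(s)\geq 1$ over $\BQ$ from potential automorphy requires the additional Brauer-induction argument (writing the trivial character of a finite Galois group as a $\BZ$-linear combination of characters induced from soluble subgroups, and thereby expressing $L(s,\omega,m)$ as a ratio of products of genuinely automorphic $L$-functions over intermediate fields). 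This is precisely how \cite{HS-BT} and \cite{Taylor} obtain the analytic consequences, and it is not ``routine Fourier analysis'' but the genuine extra step that bridges potential automorphy to the Tauberian hypotheses. With that caveat, your reduction is the standard and correct argument, and it fills in a proof that the paper itself omits.
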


In all cases we have the following:

\begin{theorem}{\label{dense}}
For each elliptic curve $E$ there is an infinite set ${\cal P}_E$ of primes $p$ such that $a_p/p^{-1/2}>1.$
\end{theorem}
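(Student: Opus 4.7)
The plan is to deduce the theorem from equidistribution of the normalized Frobenius traces, splitting into the CM and non-CM cases. The Hasse bound $|a_p|\le 2\sqrt{p}$ from (\ref{absch}) (applied to the genus one curve) lets us write $a_p=2\sqrt{p}\,\cos\theta_p$ with $\theta_p\in[0,\pi]$, so that $a_p p^{-1/2}>1$ is equivalent to $\cos\theta_p>1/2$, i.e.\ $\theta_p\in[0,\pi/3)$. The task is then reduced to exhibiting infinitely many primes (in fact, a positive Dirichlet density) with $\theta_p$ in this angular window.

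For the non-CM case I would simply invoke the Sato--Tate theorem stated just above. Under the change of variables $t=\cos\theta$ the probability measure $\frac{2}{\pi}\sqrt{1-t^2}\,dt$ on $[-1,1]$ transports to a positive measure on $[0,\pi]$ which charges the subinterval $[0,\pi/3)$ with the strictly positive mass $\frac{2}{\pi}\int_{1/2}^{1}\sqrt{1-t^2}\,dt$. This yields a set of primes of positive density with $a_p p^{-1/2}>1$, whence ${\cal P}_E$ is infinite.

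For the CM case I would use the Grossencharacter $\chi_E$ introduced in Section \ref{gross}. For a prime $p$ split in $K$ as $p=\wp\bar\wp$, writing $\chi_E([\wp])=\sqrt{p}\,e^{i\theta_p}$ gives $a_p=\chi_E([\wp])+\chi_E([\bar\wp])=2\sqrt{p}\cos\theta_p$; inert primes contribute $a_p=0$ and may be ignored. Hecke's classical equidistribution theorem, which is a Wiener--Ikehara consequence of the analytic continuation and non-vanishing on $\Re(s)=2r$ of every $L(\chi_E^r,s)$ supplied by Proposition \ref{grossprop}, says that the angles $\theta_p$ over split primes are uniformly distributed on $[0,\pi]$. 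Since split primes have Dirichlet density $1/2$ by Chebotarev, a density of $1/6$ of all primes satisfies $\theta_p\in[0,\pi/3)$, again giving an infinite ${\cal P}_E$.

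There is no real obstacle: the deep equidistribution content is already imported from Sato--Tate (non-CM) and from Hecke's theorem on Grossencharacter $L$-functions (CM). The only point that needs verification is that the normalizations in the cited Sato--Tate theorem and in Section \ref{gross} are consistent with the parametrisation $a_p=2\sqrt{p}\cos\theta_p$, which is a routine check of conventions.
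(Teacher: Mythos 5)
Your proof is correct and supplies the argument the paper leaves implicit: Theorem~\ref{dense} is stated without proof, the intended justification being exactly the Sato--Tate equidistribution quoted just above it for non-CM curves and Hecke's angle-equidistribution for Grossencharacters for CM curves. Your computations are the expected ones, and they actually yield a positive Dirichlet density of primes with $a_p p^{-1/2}>1$ in both cases, which is stronger than the asserted infinitude. One small slip worth flagging: Proposition~\ref{grossprop} gives entirety of $L(\chi_E^r,s)$ and controls zeros in $\Re(s)<0$, but it does not assert non-vanishing on the edge of the region of convergence; the non-vanishing input to Hecke's equidistribution theorem sits at $s=r/2+1$ under the normalization $|\chi_E([\wp])|=N(\wp)^{1/2}$ (not on $\Re(s)=2r$), and while it is entirely classical it is an extra ingredient not literally contained in that proposition. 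This imprecision does not affect the validity of your deduction.
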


\begin{theorem}
Assume that $I_{f}(s)$ has a meromorphic continuation to the whole of the
region defined by $\Re (s)>-3/2$ and 
also assume the generalized Riemann Hypothesis, then the line $\Re (s)=-3/2$ is a
natural boundary for $I_{E}(s)$ beyond which no further meromorphic
continuation is possible.
\end{theorem}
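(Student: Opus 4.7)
The plan is to establish $\Re(s) = -3/2$ as a natural boundary for $I_E(s)$ by reducing, via the factorization used in the proof of Theorem \ref{Meromorphic cont}, to showing that $W(s) = \prod_{p\notin S(f)}(1 - a_p p^{-s})$ has a natural boundary at $\Re(s) = 1/2$. I will do this by producing an infinite sequence of zeros of $W(s)$ accumulating at every point of $\Re(s) = 1/2$ from the right. Since the zero set of a meromorphic function on an open set is discrete, such accumulation precludes any meromorphic continuation past the line.

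The key observation uses the identity $Q_M + W_M = 1 - uY - vY$ from Lemma \ref{deco1}. Evaluating at $(u,v,X,Y)=(\pi_p,\overline{\pi_p},p,p^{-s})$ gives
\[
1 + \frac{W_M(\pi_p,\overline{\pi_p},p,p^{-s})}{Q_M(\pi_p,\overline{\pi_p},p,p^{-s})} = \frac{1 - a_p p^{-s}}{Q_M(\pi_p,\overline{\pi_p},p,p^{-s})},
\]
so the factorization from Theorem \ref{Meromorphic cont} reads $W(s) = F_M(s)\,G_M(s)$, with $F_M(s) := \prod_{p\notin S(f)} Q_M(\pi_p,\overline{\pi_p},p,p^{-s})$ meromorphic on $\BC$ (a finite product of the L-function shifts $Z^{[\varepsilon(w)]}_r(ms-n)$ from Proposition \ref{Zmero}), and $G_M(s)$ absolutely convergent in $\Re(s) > 1/2 + 1/M$ by Lemma \ref{eulel1}. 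The $p$-th Euler factor of $G_M$ vanishes at $s_{p,k} := (\log a_p + 2\pi i k)/\log p$ for $k\in\BZ$; any such $s_{p,k}$ lying in the convergent half-plane of $G_M$ is a genuine zero of $G_M$, since the remaining factors of the Euler product tend to $1$ and so stay away from zero.

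To obtain the dense accumulation, I invoke Theorem \ref{dense}, which supplies infinitely many primes $p$ with $a_p > p^{1/2}$. For any such $p$ the real part $\log a_p/\log p = 1/2 + \log(a_p/p^{1/2})/\log p$ lies in the interval $\bigl(1/2,\, 1/2 + \log 2/\log p\bigr]$ (using $|a_p|\le 2p^{1/2}$), while the imaginary parts $\{2\pi k/\log p : k\in\BZ\}$ form a lattice of spacing $2\pi/\log p$. Given any candidate boundary point $s_0 = 1/2 + it_0$ and any $\epsilon > 0$, I can choose $p$ sufficiently large in this set and $k\in\BZ$ appropriately so that $|s_{p,k} - s_0| < \epsilon$; taking $M$ with $1/M < \log(a_p/p^{1/2})/\log p$ then places $s_{p,k}$ in the convergent half-plane of $G_M$, hence makes it a zero of $G_M$.

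The main obstacle is ensuring that these zeros of $G_M$ are not cancelled by poles of $F_M$. Under the assumed GRH, the poles of $F_M$ lie only on the countable family of vertical lines $\Re(s) = 1/2 + 1/(2m)$ for $m < M$, and in any bounded region of $\BC$ the meromorphic function $F_M$ has only finitely many poles. Consequently, at most finitely many of the infinitely many $s_{p,k}$ accumulating near $s_0$ can coincide with a pole of $F_M$; the remaining members of the sequence are genuine zeros of $W = F_M G_M$. The resulting accumulation of zeros of $W(s)$ at every point of $\Re(s) = 1/2$ contradicts any hypothetical meromorphic extension of $W$ past this line. This proves the natural boundary at $\Re(s) = 1/2$ for $W(s)$, and, through the reduction in Theorem \ref{Meromorphic cont}, the natural boundary at $\Re(s) = -3/2$ for $I_E(s)$.
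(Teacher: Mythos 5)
Your overall strategy mirrors the paper's: reduce to showing $W(s)=\prod_p(1-a_pp^{-s})$ has a natural boundary at $\Re(s)=1/2$, exhibit local zeros $s_{p,k}$ accumulating at every point of that line, and use GRH to rule out cancellation by the meromorphic continuing factors. The reduction, the location of the local zeros, the use of Theorem \ref{dense}, and the observation that the GRH zeros of the shifted $L$-functions land on the lines $\Re(s)=1/2+1/(2m)$ are all sound and match the paper.

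The gap is in the cancellation step. You argue that ``in any bounded region the meromorphic function $F_M$ has only finitely many poles, so at most finitely many $s_{p,k}$ can coincide with a pole of $F_M$.'' But $M$ is not fixed in your construction: to place a given $s_{p,k}$ in the convergent half-plane of $G_M$ you must take $1/M<\log(a_p/p^{1/2})/\log p$, and Theorem \ref{dense} gives you only $a_p>p^{1/2}$, not $a_p>p^{1/2+\delta}$, so $\Re(s_{p,k})\to 1/2$ and you are forced to take $M\to\infty$ as $p\to\infty$. Each $F_M$ is a different meromorphic function with a different (and, as $M$ grows, richer) pole set; the ``finitely many poles of $F_M$ near $s_0$'' statement applies to a single $F_M$, for which only finitely many of your zeros $s_{p,k}$ are even in the convergent region of the corresponding $G_M$. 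So the counting argument does not deliver infinitely many uncancelled zeros near $s_0$.

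What actually closes the gap — and what the paper does — is an arithmetic incompatibility, not a counting one: if $s_{p,k}$ were a pole of $F_M$, then under GRH one would need $\Re(s_{p,k})=\log a_p/\log p$ to equal $1/2+1/(2m)$ for some $m<M$, i.e.\ $a_p=p^{(m+1)/(2m)}$; since $a_p\in\BZ$ and $p^{(m+1)/(2m)}$ is irrational for $m>1$ (and $a_p=p$ is excluded for $p\ge 5$ by Hasse), this cannot happen for large $p$ regardless of which $M$ is chosen. The paper phrases this as the rationality of $p^{-s'}$ at a local zero being incompatible with the form $p^{-(1/2m-3/2+\gamma i/m)}$ forced by a GRH zero, pinning the only possible interference to $m=1$ and hence to $\Re(s)=-1$, away from the boundary. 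You should replace the ``finitely many poles'' sentence with this arithmetic argument; with that substitution your proof is correct and essentially that of the paper.
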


\proof We show that for infinitely many $p$ the equation 
\begin{equation}
 1-\frac{a_{p}p^{-s}}{(p^{2}-C_{p})} =0  \label{zeros}
\end{equation}
has a solutions at $s_{p}+\left( \theta _{p}+2\pi n\right) i/\log p$ 
where $s_{p}\in \BR$, $s_{p}>-3/2$. We also show that 
$s_{p}\rightarrow -3/2$ as 
$p\rightarrow\infty .$ 
We then need to establish that for sufficiently large $p$ these
zeros cannot be zeros or poles of the meromorphic functions 
being used to do the continuation.

Note first that for almost all primes $p$ we have $a_p\ne 0$ and 
(\ref{zeros}) is satisfied if and only if
\begin{equation}\label{zer1}
p^{s}=p^{-\frac{3}{2}}
\left(\frac{b_p}{1-p^{-1}+b_p p^{-\frac{3}{2}}}\right)
\end{equation}
where $b_p:=a_p p^{-\frac{1}{2}}$. Put
\begin{equation}\label{zer2}
r_p:=\frac{b_p}{1-p^{-1}+b_p p^{-\frac{3}{2}}}.
\end{equation}
By Theorem \ref{dense} there is an infinite set
${\cal P}_E$ of primes $p$ such that $b_p>1$ which in turn implies 
\begin{equation}\label{zer3}
b_p>\frac{1-p^{-1}}{1-p^{-\frac{3}{2}}}
\end{equation}
for every $p\in {\cal P}_E$. Inequality (\ref{zer3}) implies $r_p>1$,
hence we have
zeros of the form $s_{p}+\left( \theta _{p}+2\pi n\right) i/\log p$ where 
$s_{p}\in \BR$, $s_{p}>-3/2$. 
Since $r_p$ remains bounded as $p$ ranges over ${\cal P}_E$ we also
have $s_{p}\rightarrow -3/2$ as $p\rightarrow
\infty $ in ${\cal P}_\epsilon$.

We can therefore realise every point $-3/2+ai$ on the candidate natural
boundary as a limit point of these local zeros by letting $p\rightarrow
\infty $ and choosing $n_{p}$ with the property that $\left( \theta
_{p}+2\pi n_{p}\right) i/\log p\rightarrow a.$

Recall that the following expression was how we continued $I_{f}(s)$ to $\Re(s)>-3/2$ by considering $M$ tending to infinity:
$$I_{f}(s)=\prod_{p\notin S(f)} Q_M(\pi_p,\overline{\pi_p },p,p^{-s})\cdot \prod_{p\notin S(f)}
\left(1+\frac{W_M(\pi_p,\overline{\pi_p },p,p^{-s})}{Q_M(\pi_p,\overline{\pi_p },p,p^{-s})}\right). $$

We therefore need to confirm that the limit points of zeros we have identified cannot be killed by poles or zeros of 
 $
Z_{r}(ms+n)\ $or $\zeta (ms-n)$ where $n/m=-3/2.$ We established that in fact
the zeta function $Z_{n}(ms+n)$ can be rewritten in terms of the symmetric power $L$-functions so we only have to worry about zeros possibly cancelling our local zeros. Since we are assuming the generalized Riemann Hypothesis we can assume that a non-trivial zero $\rho $ of the Riemann zeta function or the symmetric power $L$-functions is
of the form $1/2+\gamma i.$ Suppose that $ms^{\prime }+n=\rho $ and $%
p^{-s^{\prime }}=-a_{p}^{-1}p^{2}+a_{p}^{-1}p-1.$ Note that $%
-a_{p}^{-1}p^{2}+a_{p}^{-1}p-1$ is a rational number, whilst $p^{-s^{\prime
}}=p^{-(1/2m-3/2+\gamma i/m)}$. So the only possible value of $m$ for which
this could happen is $m=1.$ The only global zeros that can possibly
interfere with local zeros are all on $\Re (s)=-1.$

This confirms then that $\Re (s)=-3/2$ is a natural boundary for $%
I_{E}(s).\square $


\end{document}